\newtheorem{theorem}{Theorem}[section]
\newtheorem{lemma}[theorem]{Lemma}
\newtheorem{proposition}[theorem]{Proposition}
\newtheorem{corollary}[theorem]{Corollary}
\numberwithin{equation}{section}
\theoremstyle{definition}
\newtheorem{definition}[theorem]{Definition}
\newtheorem{example}[theorem]{Example}
\theoremstyle{remark}
\def\N{\mathbb{N}}
\def\Z{\mathbb{Z}}
\def\Q{\mathbb{Q}}
\def\R{\mathbb{R}}
\def\z{\mathbf{z}}
\def\U{\mathcal{U}}
\def\V{\mathcal{V}}
\def\W{\mathcal{W}}
\def\lk{\textcircled{$\star$}_{\ell,k}}
\def\ostar{\textcircled{$*$}}
\def\FP{\text{FP}}
\begin{document}

\title[Infinite monochromatic patterns in $\Z$]
{Infinite monochromatic patterns 
\\
in the integers}

\author{Mauro Di Nasso}

\address{Dipartimento di Matematica\\
Universit\`a di Pisa, Italy}

\email{mauro.di.nasso@unipi.it}

\thanks{This work was supported 
by the Italian national research project PRIN 2017 
``Mathematical logic: models, sets, computability"}

\subjclass[2000]
{Primary 05D10, 11B75; Secondary 03E05.}

\keywords{Ramsey Theory, Monochromatic patterns in the integers, 
Algebra in the Stone-\v{Cech} compactification}


\begin{abstract}
We show the existence of several infinite monochromatic patterns 
in the integers obtained as values of suitable symmetric polynomials;
in particular, we obtain extensions of both the additive 
and multiplicative versions of Hindman's theorem. 
These configurations are obtained by means of suitable symmetric 
polynomials that mix the two operations. The simplest example is the following.
For every finite coloring $\N=C_1\cup\ldots\cup C_r$ there exists
an infinite increasing sequence $a<b<c<\ldots$ such that all elements
below are monochromatic:
$$a,b,c,\ldots, a+b+ab, a+c+ac, b+c+bc,\ldots,a+b+c+ab+ac+bc+abc,\ldots.$$
\noindent
The proofs use tools from algebra in the space of ultrafilters $\beta\Z$.
\end{abstract}

\maketitle


\maketitle

\section{Introduction}

Many of the classic results in arithmetic Ramsey Theory are about the existence of  
monochromatic patterns found in any given finite coloring of the integers
or of the natural numbers. 
(As usual in Ramsey Theory, ``coloring" means
partition, and a set is called ``monochromatic" if it is included in one piece of the partition).
A great amount of work has been devoted to the search for monochromatic finite patterns, 
the archetype of which are the (finite) arithmetic progressions. Indeed,
a cornerstone in this field of research is \emph{Van der Waerden Theorem},
stating that in any finite coloring of the natural numbers one always finds
arbitrarily long monochromatic arithmetic progressions.

Also infinite patterns have been repeatedly considered by researchers, 
although for them the variety of relevant examples does not seem to be comparable 
to that of finite configurations. 
The prototype of infinite monochromatic configurations in the natural numbers
is the one given by the celebrated \emph{Hindman Theorem}:
``For every finite coloring $\N=C_1\cup\ldots\cup C_r$ of the natural numbers, 
there exists an injective sequence $(x_n)_{n=1}^\infty$ such that all finite sums
$x_{n_1}+\ldots+x_{n_s}$ where $n_1<\ldots<n_s$ are monochromatic."
The same result holds for the natural numbers with multiplication, and more generally, 
for any cancellative semigroup.

Generalizations of Hindman's Finite Sum Theorem are obtained as corollaries 
of \emph{Milliken-Taylor Theorem}; for instance, for every choice
of coefficients $a_1,\ldots,a_m\in\N$, there exists an injective sequence 
$(x_n)_{n=1}^\infty$ such that all sums 
$a_1 \sum_{n\in F_1}x_n+\ldots+a_m\sum_{n\in F_m}x_n$ 
where the nonempty finite sets $F_1<\ldots<F_m$ are arranged in increasing order
(that is, $\max F_i<\min F_{i+1}$), are monochromatic.
In the recent papers \cite{bhw,lu}, within the general framework of semigroups,
polynomial extensions of Milliken-Taylor Theorem
have been proved which produce plenty of similar (but much more general)
infinite monochromatic patterns.

The goal of this paper is to show that several infinite monochromatic configurations
on the integers and on the natural numbers can be found where the additive 
and the multiplicative structure are mixed with the use of symmetric polynomials. 
(We pay attention that the considered patterns be not degenerate,
in the sense that they are made of pairwise distinct elements.)
To this end, we consider a class of associative and commutative operations
on the integers originated by affine transformations, and then use the machinery 
of algebra on the Stone-\v{C}ech compactification.

The following property is probably the simplest corollary of our results 
which already provides a significant example of the type of
``symmetrical" monochromatic patterns that can be obtained
combining the sum and product operations
(see Example \ref{example1} with $\ell=1$):

\begin{itemize}
\item
\emph{For every finite coloring of the natural numbers
there exists an injective sequence $(x_n)_{n=1}^\infty$ such that
all symmetric expressions below are monochromatic:}
\begin{multline*}
x_1,\ x_2,\ x_3,\ \ldots\ ,\ 
x_1+x_2 + x_1 x_2,\ x_1+x_3+x_1x_3,\ x_2+x_3 +x_2 x_3,\ \ldots\ ,
\\
x_1+x_2+x_3+x_1 x_2+x_1 x_3+x_2 x_3+x_1 x_2 x_3,\ \ldots.
\end{multline*}
\end{itemize}
(For the sake of brevity, we listed explicitly only the expressions
that involve the first three elements of the sequence.)
Notice that the above pattern is obtained by considering
finite iterations of the symmetric polynomial function $P(a,b)=a+b+ab$.
The key observation for the proof is that such a function is an associative operation on $\N$,
and in fact is the operation inherited from the multiplicative structure 
via the affine transformation $T:a\mapsto a+1$.

In this regard, it is worth mentioning that monochromatic patterns in the natural numbers that
mix additive and multiplicative structure are of great interest in the current research in 
arithmetic Ramsey Theory.
For instance, it was only in 2010 that V. Bergelson \cite{be} and N. Hindman \cite{hi2} 
independently proved that the configuration $\{a,b,c,d\}$ where $a+b=c\cdot d$ is monochromatic.
In 2017 by J. Moreira \cite{mo} showed 
that the pattern $\{a,a+b,a\cdot b\}$ is monochromatic.
In 2019, J.M. Barrett, M. Lupini and J. Moreira \cite{blm}, building also on
previous work by Luperi Baglini and the author \cite{dl}, 
proved other similar partition regular configurations,
including $\{a, a+b, a+b+a\cdot b\}$. 
It is still an open problem whether $\{a,b,a+b,a\cdot b\}$
is a monochromatic configuration.

The paper is organized as follows.
In Sections 2 and 3, we present our results and give several examples.
In Section 4 we recall all the notions required for the proofs, which are given in
the following Section 5. The last Section 6 contains a list of remarks
and possible directions for future reserach.

%

\begin{section}{Symmetric polynomials and monochromatic configurations}\label{sec-results}

Throughout the paper, we denote by $\N=\{1,2,\ldots\}$ the set of 
positive integers.

The combinatorial configurations we are interested in are symmetric,
in the sense that they originate from suitable symmetric polynomials.
Recall the following

\begin{definition}
For $j=1,\ldots,n$, the \emph{elementary symmetric polynomial}
in $n$ variables is the polynomial:
$$e_{j}(X_1,\ldots,X_n)\ =\sum_{1\le i_1<\ldots<i_j\le n}X_{i_1}\cdots X_{i_j}\ =
\sum_{G\in[\{1,\ldots,n\}]^j}\prod_{s\in G}X_s$$
where we used the notation $[X]^j=\{G\subseteq X\mid |G|=j\}$.
\end{definition}

Notice that for all real numbers $a_1,\ldots,a_n$, 
we have 
$$\prod_{j=1}^n(a_j+1)\ =\ c+1$$
where
$$c\ =\ \sum_{j=1}^{n}e_{j}(a_1,\ldots,a_n)\ =
\sum_{\emptyset\ne G\subseteq\{1,\ldots,n\}}\prod_{s\in G}a_s.$$
More generally, for $\ell, k\ne 0$, it is easily verified that
$$\prod_{j=1}^n(\ell a_j+k)\ =\ \ell c+k$$
where
\begin{multline*}
c\ =\ \sum_{j=1}^{n}\ell^{j-1}k^{n-j}e_{j}(a_1,\ldots,a_n)+\frac{k^n-k}{\ell}\ =
\\
=\ \sum_{\emptyset\ne G\subseteq\{1,\ldots,n\}}
\left(\ell^{|G|-1}k^{n-|G|}\cdot\prod_{s\in G}a_s\right)+ 
\frac{k^n-k}{\ell}.
\end{multline*}

The crucial point here is the fact that there exists
a commutative and associative operation $\lk$ such that
$a_1\,\lk\,\cdots\,\lk\, a_n=c$,
where $c$ is the number defined as above. (See \S \ref{operationslk}).

Notice that the above number $c=c(a_1,\ldots,a_n)$ belongs
to $\Z$ for all $a_1,\ldots,a_n\in\Z$ if and only if $\ell$ divides $k(k-1)$.
This justifies our attention on the following class of symmetric polynomials.

\begin{definition}\label{def-symmetricpolynomial}
For $\ell,k\in\Z$ with $\ell,k\ne 0$, 
the \emph{$(\ell,k)$-symmetric polynomial} in $n$ variables is:
\begin{multline*}
\mathfrak{S}_{\ell,k}(X_1,\ldots,X_n)\ :=\ 
\sum_{j=1}^{n} \ell^{j-1} k^{n-j} e_j(X_1,\ldots,X_n) + \frac{k^n-k}{\ell}
\\
=\ \sum_{\emptyset\ne G\subseteq\{1,\ldots,n\}}
\left(\ell^{|G|-1}k^{n-|G|}\cdot\prod_{s\in G}X_s\right)+ 
\frac{k^n-k}{\ell}.
\end{multline*}
\end{definition}


For instance, if $k=1$ and $n=4$, then for every $\ell\ne 0$:
\begin{multline*}
\mathfrak{S}_{\ell,1}(a,b,c,d) \ = \
a+b+c+d+\ell(ab+ac+ad+bc+bd+cd)+ 
\\
+\ell^2(abc+abd+acd+bcd)+
\ell^3 abcd.
\end{multline*}


Recall that for infinite sequences of natural numbers $(x_n)_{n=1}^\infty$,
the corresponding set of finite sums is the set:
$$\text{FS}(x_n)_{n=1}^\infty\ :=\ \{x_{n_1}+\ldots+x_{n_s}\mid n_1<\ldots<n_s\}.$$

A cornerstone result in arithmetic Ramsey Theory shows
the existence of infinite monochromatic patterns of finite sums.

\begin{itemize}
\item
Hindman Finite Sums Theorem (1974) \cite{hi1}:
\emph{For every finite coloring $\N=C_1\cup\ldots\cup C_r$
there exist a color $C_i$ and an injective sequence $(x_n)_{n=1}^\infty$ such that
$\text{FS}(x_n)_{n=1}^\infty\subseteq C_i$.
More generally, for every injective sequence of natural numbers
$(x_n)_{n=1}^\infty$ and for every finite coloring 
$\text{FS}(x_n)_{n=1}^\infty=C_1\cup\ldots\cup C_r$
of the corresponding set of finite sums,
there exist an injective sequence $(y_n)_{n=1}^\infty$ and
a color $C_i$ such that $\text{FS}(y_n)_{n=1}^\infty\subseteq C_i$.}
\end{itemize}

The same result is also true if one considers finite products
instead of finite sums.

In analogy with the set of finite sums
we give the following

\begin{definition}\label{def-symmetricsystem}
Let $(x_n)_{n=1}^\infty$ be an infinite sequence, and let $\ell,k\in\Z$ with $\ell,k\ne 0$. 
The corresponding
$(\ell,k)$-\emph{symmetric system} is the set:
$$\mathfrak{S}_{\ell,k}(x_n)_{n=1}^\infty\ :=\ 
\left\{\mathfrak{S}_{\ell,k}(x_{n_1},\ldots,x_{n_s})\mid n_1<\ldots<n_s\right\}.$$
\end{definition}

For suitable $\ell$ and $k$, $(\ell,k)$-symmetric systems are partition regular
on $\Z$ and on $\N$.

\begin{theorem}\label{main1}
Assume that $\ell,k\ne 0$ are integers where $\ell$ divides $k(k-1)$.
Then for every finite coloring $\Z=C_1\cup\ldots\cup C_r$ 
there exist an injective sequence $(x_n)_{n=1}^\infty$ of integers and
a color $C_i$ such that $\mathfrak{S}_{\ell,k}(x_n)_{n=1}^\infty\subseteq C_i$.

More generally, for every injective sequence of integers $(x_n)_{n=1}^\infty$
and for every finite coloring 
$\mathfrak{S}_{\ell,k}(x_n)_{n=1}^\infty=C_1\cup\ldots\cup C_r$
of the corresponding $(\ell,k)$-symmetric system,
there exist an injective sequence $(y_n)_{n=1}^\infty$ of integers and
a color $C_i$ such that $\mathfrak{S}_{\ell,k}(y_n)_{n=1}^\infty\subseteq C_i$.

Moreover, for positive $\ell\in\N$, the above partition regularity properties
are also true if we replace the integers $\Z$ with the natural numbers $\N$.
\end{theorem}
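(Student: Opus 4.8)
The plan is to exploit the affine conjugation hidden in the definition of $\mathfrak{S}_{\ell,k}$. Setting $T(a)=\ell a+k$, the displayed equivalence just before Definition \ref{def-symmetricpolynomial} says exactly that $T(\mathfrak{S}_{\ell,k}(a_1,\dots,a_n))=\prod_{j=1}^n(\ell a_j+k)=\prod_{j=1}^n T(a_j)$. In particular the binary operation $a\star b:=\mathfrak{S}_{\ell,k}(a,b)$ satisfies $T(a\star b)=T(a)\cdot T(b)$, so $T$ is a homomorphism from $(\Z,\star)$ into $(\Z,\cdot)$; the hypothesis $\ell\mid k(k-1)$ is precisely what makes the image $S:=T(\Z)=k+\ell\Z$ closed under multiplication, so that $\star$ takes integer values and $T\colon(\Z,\star)\to(S,\cdot)$ is a semigroup isomorphism (here I would cite the associativity and commutativity of $\star$ proved in \S\ref{operationslk}). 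Since $T$ is injective, an easy induction gives $T(\mathfrak{S}_{\ell,k}(x_{n_1},\dots,x_{n_s}))=\prod_{i=1}^s T(x_{n_i})$, so that $T$ maps the symmetric system $\mathfrak{S}_{\ell,k}(x_n)_{n=1}^\infty$ bijectively onto the multiplicative finite-products set $\FP(T(x_n))=\{\prod_{i} T(x_{n_i})\mid n_1<\dots<n_s\}$ inside $S$.

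This reduces the whole theorem to the finite-products form of Hindman's Theorem inside the multiplicative subsemigroup $S$ of $\Z$. A finite coloring $\Z=C_1\cup\dots\cup C_r$ transports through $T$ to a finite coloring of $S$, and any monochromatic set $\FP(a_n)\subseteq S$ pulls back via $T^{-1}$ to a monochromatic symmetric system. To produce such an $\FP$-set I would use algebra in $(\beta\Z,\cdot)$: this is a compact right-topological semigroup, and $\overline{S}=\mathrm{cl}_{\beta\Z}(S)$ is a closed subsemigroup because $S$ is a subsemigroup. After discarding the absorbing element $0$ and the units $\pm1$, the set $S\setminus\{0,\pm1\}$ is still a subsemigroup, and the free ultrafilters in $\mathrm{cl}_{\beta\Z}(S\setminus\{0,\pm1\})$ form a nonempty closed subsemigroup of $(\beta\Z,\cdot)$, which by Ellis's Theorem contains an idempotent $p=p\cdot p$. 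For the color $C_i$ with $T(C_i)\in p$, the Galvin--Glazer construction yields a sequence $(a_n)$ in $S$ with $\FP(a_n)\subseteq T(C_i)$; setting $x_n=T^{-1}(a_n)$ gives the desired monochromatic symmetric system.

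For the ``more general'' statement one replaces $\overline{S}$ by the closed subsemigroup $\bigcap_{N}\mathrm{cl}_{\beta\Z}\{\prod_i T(x_{n_i})\mid N\le n_1<\dots<n_s\}$ of $(\beta\Z,\cdot)$, which is nonempty by compactness and the finite intersection property and is readily seen to be a subsemigroup; an idempotent $p$ in it has every member contained in $\FP(T(x_n))$, and the same Galvin--Glazer recursion, run inside members of $p$, produces the monochromatic sub-system. Finally, for the $\N$-version with $\ell>0$ I note that $T(a)=\ell a+k\to+\infty$ on $\N$, so $S\cap(1,\infty)$ is a multiplicative subsemigroup of $\N$ whose preimage under $T$ is a final segment of $\N$; running the argument with a free idempotent concentrated there keeps every $x_n$ and every symmetric value positive.

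The step I expect to be the genuine obstacle is nondegeneracy: the abstract finite-products theorem gives an injective generating sequence but not that all the products $\prod_i a_{n_i}$ --- equivalently, after applying $T^{-1}$, all the symmetric values --- are pairwise distinct. I would secure this inside the Galvin--Glazer recursion: since $p$ is free, each set $B\in p$ from which $a_{n+1}$ is chosen is infinite, so I may additionally require $|a_{n+1}|>\prod_{i\le n}|a_i|$. This growth condition forces $\prod_{i\in F}a_i\neq\prod_{i\in G}a_i$ whenever $F\neq G$ (compare the largest index in the symmetric difference), and hence, by injectivity of $T^{-1}$, the pattern consists of pairwise distinct integers.
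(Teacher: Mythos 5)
Your proposal is correct and is essentially the paper's own argument transported through the isomorphism $T_{\ell,k}$: the paper runs the identical machinery (Ellis idempotents among non-principal ultrafilters, the Galvin--Glazer recursion made injective, intersection of closures of tail finite-product sets for the ``more generally'' clause, and restriction to a final-segment subsemigroup for $\N$) directly in $(\beta\Z,\lk)$ via its Theorem~\ref{nonprincipalidempotents}, rather than in the isomorphic multiplicative copy $(S_{\ell,k},\cdot)\subseteq(\Z,\cdot)$ as you do. The one slip is in your $\N$-case: for $k\ge 2$ the preimage $T^{-1}\bigl(S\cap(1,\infty)\bigr)=\{a\in\Z\mid a>(1-k)/\ell\}$ is \emph{not} contained in $\N$, so you should raise the threshold to $\max(1,k)$ (or note that $T(\N)$ is cofinite in $S\cap(1,\infty)$ and hence belongs to your free idempotent anyway), which is exactly the role of the paper's subsemigroup $\N'=\{a\in\Z\mid a>-\tfrac{k}{\ell}+\tfrac{N}{\ell}\}\subseteq\N$.
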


Here are two of the simplest examples.

\begin{example}\label{example1}
When $k=1$,  for every $\ell\in\N$ one obtains the following infinite monochromatic pattern in
the natural numbers, where the sequence $(x_n)_{n=1}^\infty$ is injective:\footnote
{~Following the common use, for simplicity we will say that: ``the pattern (or configuration)
$\mathcal{S}(x_n)_{n=1}^\infty$ 
is monochromatic in $X$" to mean that: ``for every finite partition $X=C_1\cup\ldots\cup C_r$ there
exist a color $C_i$ and a sequence $(x_n)_{n=1}^\infty$ such that 
$\mathcal{S}(x_n)_{n=1}^\infty\subseteq C$."}
$$\left\{\sum_{\emptyset\ne G\subseteq F}
\left(\ell^{|G|-1}\prod_{s\in G}x_s\right)\,\Bigg|\ \emptyset\ne F\subset\N\ \text{finite}\right\}.$$

That is, the following elements are monochromatic:
\begin{itemize}
\item
$x_s$ for all $s$,
\item
$x_s+x_t + \ell\, x_s x_t$ for all $s<t$,
\item
$x_s+x_t+x_u+\ell(x_s x_t+ x_s x_u+ x_t x_u)+\ell^2\, x_s x_t x_u$
for all $s<t<u$, 
\item
$x_s+x_t+x_u+x_v+\ell(x_s x_t+x_s x_u+x_s x_v+
x_t x_u+x_t x_v+x_u x_v)+\ell^2(x_s x_t x_u+
 x_s x_t x_v+ x_s x_u x_v+ x_t x_u x_v)+\ell^3\, x_s x_t x_u x_v$ for all 
$s<t<u<v$; and so forth.
\end{itemize}
\end{example}

\begin{example}
When $\ell=k=2$, 
one obtains the following infinite monochromatic pattern in
the natural numbers, where the sequence $(x_n)_{n=1}^\infty$ is injective:
$$\left\{2^{|F|-1}\cdot\left(\sum_{\emptyset\ne G\subseteq F}
\prod_{s\in G}x_s\right)+2^{|F|-1}-1\,\Bigg|\ \emptyset\ne F\subset\N\ \text{finite}\right\}.$$
That is, the following elements are monochromatic:
\begin{itemize}
\item
$x_s$ for all $s$,
\item
$2\,(x_s+x_t + x_s x_t)+1$ for all $s<t$,
\item
$4\,(x_s+x_t+x_u+x_s x_t+x_s x_u+x_t x_u+x_s x_t x_u)+3$
for all $s<t<u$, 
\item
$8\,(x_s+x_t+x_u+x_v+x_s x_t+x_s x_u+ x_s x_v+
x_t x_u+ x_t x_v+x_u x_v+x_s x_t x_u+
x_s x_t x_v+ x_s x_u x_v+ x_t x_u x_v+ x_s x_t x_u x_v)+7$ for all 
$s<t<u<v$; and so forth.
\end{itemize}
\end{example}

As already mentioned, a fundamental result in arithmetic Ramsey Theory 
is the classic

\begin{itemize}
\item
Van der Waerden Theorem (1927)  \cite{VdW}:
\emph{For every finite coloring $\N=C_1\cup\ldots\cup C_r$
and for every $L\in\N$ there exists a monochromatic
arithmetic progression of length $L$; that is, there exist
a color $C_i$ and elements $a,b\in\N$ such that
$a, a+b, a+2b, \ldots, a+Lb\in C_i$.}
\end{itemize}

The following year 1928, the above Ramsey property was strengthened by Brauer \cite{br},
who proved that one can also have the common difference $b$ 
of the same color as the elements of the progression.

A few decades later, as a result of his studies about partition regularity
of homogeneous systems of linear Diophantine equations,
Deuber \cite{de} demonstrated further generalizations; in particular,
he showed the partition regularity of the so-called $(m,p,c)$-sets. 

\begin{itemize}
\item
Deuber Theorem (1974)  \cite{de}:
\emph{For every $m,p,c\in\N$ and for every
finite coloring $\N=C_1\cup\ldots\cup C_r$
there exists a monochromatic $(m,p,c)$-set;
that is, there exist a color $C_i$ and elements
$a_0,a_1,\ldots,a_m\in C_i$ such that
$a_j+\sum_{s=0}^{j-1}n_s a_s\in C_i$ for every
$j\in\{1,\ldots,m\}$ and for all $n_0,\ldots,n_{j-1}\in\{-p,\ldots,p\}$.}
\end{itemize}

The following analogue of Deuber Theorem holds in our context.

\begin{theorem}\label{main2}
Let $\ell,k$ be integers where $\ell\ne 0$ divides $k-1$,
let $m\in\N$, and let $L\in\N$. Then for every finite coloring $\Z=C_1\cup\ldots\cup C_r$ 
there exist a color $C_i$ and elements $a_0,a_1,\ldots,a_m\in C_i$
such that for every $j=1,\ldots,m$ and for all $n_0,\ldots,n_{j-1}\in\{0,1,\ldots,L\}$:
$$\frac{1}{\ell}\left((\ell a_j+k)\prod_{s=0}^{j-1}(\ell a_s+k)^{n_s}-k\right)\in C_i$$
where we can assume that $(\ell a_j+k)\ne 0,1,-1$ for all $j$.\footnote
{~This condition is needed to get
meaningful configurations.}

Moreover, for positive $\ell\in\N$, the above partition regularity property is
also true if we replace the integers $\Z$ with the natural numbers $\N$.
\end{theorem}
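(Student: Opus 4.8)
The plan is to transfer the additive problem to a multiplicative one through the affine map $T(x)=\ell x+k$, and then to reduce the resulting multiplicative Deuber configuration to the classical (additive) Deuber Theorem by working inside the powers of a single generator. Since $\ell\mid k-1$ we have $k\equiv 1\pmod{\ell}$, so $T$ is a bijection from $\Z$ onto $S:=\{b\in\Z\mid b\equiv 1\pmod{\ell}\}$, and $S$ is closed under multiplication and contains $1$; moreover, for every $b\in S$ the inverse $T^{-1}(b)=(b-k)/\ell$ is again an integer. The whole point of the hypothesis $\ell\mid k-1$ is exactly this closure and integrality. A direct computation then rewrites the element in the statement as
\[
\frac{1}{\ell}\Bigl((\ell a_j+k)\prod_{s=0}^{j-1}(\ell a_s+k)^{n_s}-k\Bigr)=T^{-1}\!\Bigl(T(a_j)\prod_{s=0}^{j-1}T(a_s)^{n_s}\Bigr),
\]
so, writing $b_s=T(a_s)$, the target is precisely a multiplicative $(m,L,1)$-Deuber configuration $b_j\prod_{s<j}b_s^{n_s}$ inside the semigroup $(S,\cdot)$, read back through $T^{-1}$.

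First I would fix the coloring. Given $\Z=C_1\cup\dots\cup C_r$ with associated map $\chi$, I choose an integer $h\equiv 1\pmod{\ell}$ with $h\geq 2$ (for instance a large power of $1+|\ell|$, taken large enough that also $h\geq \ell+k$), and I restrict attention to the powers $\{h^n\mid n\in\N\}\subseteq S$. These powers induce a finite coloring $\psi$ of $(\N,+)$ by $\psi(n):=\chi\bigl(T^{-1}(h^n)\bigr)$. I then apply the classical Deuber Theorem (with parameters $m$ and $p=L$) to $\psi$: this produces a color $i$ and positive integers $c_0,\dots,c_m$ with $\psi(c_j)=i$ and $\psi\bigl(c_j+\sum_{s<j}n_s c_s\bigr)=i$ for all $j=1,\dots,m$ and all $n_s\in\{-L,\dots,L\}$, hence in particular for all $n_s\in\{0,1,\dots,L\}$. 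Setting $a_j:=T^{-1}(h^{c_j})$, so that $\ell a_j+k=h^{c_j}$, the single-generator choice collapses the product of powers into
\[
T^{-1}\!\Bigl(h^{c_j}\prod_{s<j}(h^{c_s})^{n_s}\Bigr)=T^{-1}\!\bigl(h^{\,c_j+\sum_{s<j}n_s c_s}\bigr),
\]
which lies in $C_i$ exactly because $\psi$ takes the value $i$ on the exponent; likewise each $a_j\in C_i$.

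It remains to check the loose ends. Nondegeneracy is automatic, since $h^{c_j}\geq 2$ forces $\ell a_j+k\neq 0,1,-1$ for all $j$. For the $\N$-case (where $\ell>0$) the lower bound $h\geq \ell+k$ ensures that every value $h^{\,c_j+\sum_{s<j}n_s c_s}\geq h\geq \ell+k$, whence each produced element $T^{-1}(h^{E})\geq 1$ is a genuine positive integer; over $\Z$ no such positivity is needed. I expect the main obstacle to be not the Ramsey-theoretic core — which is entirely outsourced to Deuber's Theorem via the single generator $h$ — but the bookkeeping that keeps every produced number in the prescribed domain: the multiplicative closure of $S$, the integrality of $T^{-1}$ on $S$, and the positivity over $\N$, all of which rest on $\ell\mid k-1$ together with the choice of a sufficiently large $h$. (Alternatively, instead of collapsing to a single generator one could invoke the Central Sets Theorem for the semigroup $(S,\cdot)$ directly in $\beta\Z$, which is closer in spirit to the rest of the paper.)
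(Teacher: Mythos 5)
Your proof is correct, but it takes a genuinely different route from the paper's. The paper proves this theorem by invoking the Bergelson--Johnson--Moreira generalization of Deuber's Theorem for commutative semigroups (Theorem \ref{bjm}) applied directly to the cancellative semigroup $(\Z',\lk)$ with $\Z'=\Z\setminus\{-\frac{k+1}{\ell},-\frac{k}{\ell}\}$, using the $\lk$-power-product homomorphisms $\varphi_\nu(a_0,\ldots,a_{j-1})=a_0^{(\nu_0)}\lk\cdots\lk a_{j-1}^{(\nu_{j-1})}$; the case of $\N$ is then handled via the subsemigroup $\{a\in\Z\mid a>-\frac{k}{\ell}\}$ together with an auxiliary color absorbing the finitely many non-positive integers. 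You instead collapse the whole problem into the cyclic subsemigroup generated by a single element $h\equiv 1\pmod{\ell}$, $h\ge\max(2,\ell+k)$, pull the given coloring back along $n\mapsto T^{-1}(h^n)$, and let the classical (additive) Deuber Theorem on the exponents do all the Ramsey-theoretic work; the identity $T^{-1}\bigl(h^{c_j}\prod_{s<j}(h^{c_s})^{n_s}\bigr)=T^{-1}\bigl(h^{c_j+\sum_{s<j}n_s c_s}\bigr)$ then transfers the monochromatic exponent configuration to the required one, and your bookkeeping (integrality of $T^{-1}$ on $S$, nondegeneracy $\ell a_j+k=h^{c_j}\ge 2$, positivity over $\N$ from $h\ge \ell+k$) is all in order. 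What each approach buys: yours is markedly more elementary, resting only on Deuber (1973) rather than on the 2017 semigroup extension, and it treats $\Z$ and $\N$ uniformly simply by taking $h$ large; note also that since you only need coefficients $n_s\in\{0,\ldots,L\}$, you sidestep the positivity issues implicit in the quoted form of Deuber's Theorem with negative coefficients. The price is that your witnesses are of a very special shape --- every $a_j$ and every configuration element lies in $T^{-1}(\{h^n\mid n\in\N\})$, i.e.\ inside one cyclic subsemigroup --- which satisfies the theorem as stated but yields less freedom than the paper's method; the semigroup-level argument keeps the generators unconstrained, fits the ultrafilter/semigroup machinery used throughout the paper, and is what the paper reuses (with cancellativity and infinite-order arguments in $(\Z',\lk)$) to obtain the pairwise-distinctness conclusion of Theorem \ref{main3}.
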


\begin{example}
In the simple case when $\ell=k=1$ and $m=L=2$
one obtains the following monochromatic pattern in the natural numbers:
\footnotesize{\begin{multline*}
a,\ b,\ c,\ a+b+ab,\ a+c+ac,\ b+c+bc,\ a+b+c+ab+ac+bc+abc,
\\
a^2b+a^2 +2ab+2a+b,\ 
a^2c+a^2 +2ac+2a+c,\ b^2c+b^2 +2bc+2b+c,
\\
a^2 bc+a^2b+a^2c+2abc+2ab+2ac+a^2+bc+2a+b+c,
\\
ab^2c+ab^2+b^2c+2abc+2ab+2bc+b^2+ac+a+2b+c,
\\
a^2b^2c+2a^2bc+2ab^2c+4abc+a^2c+b^2c+2ac+2bc+
a^2b^2+2a^2b+2ab^2+4ab+a^2+b^2+2a+2b+c.
\end{multline*}}
\end{example}

As a consequence of Theorem \ref{main2}, the following analogue of 
Brauer Theorem is proved, where elements in the monochromatic
configuration are all distinct:

\begin{theorem}\label{main3}
Let $\ell,k$ be integers where $\ell\ne 0$ divides $k-1$.
Then for every finite coloring $\Z=C_1\cup\ldots\cup C_r$ 
and for every $L\in\N$ there exist a color $C_i$ and elements $a,b$ such that
$$a,\ b,\  \frac{1}{\ell}\big((\ell a+k)(\ell b+k)-k\big),\ 
\ldots\ ,\  \frac{1}{\ell}\big((\ell a+k)(\ell b+k)^L-k\big)\ \in\ C_i$$
where we can assume the above elements to be pairwise distinct.

Moreover, for positive $\ell\in\N$, the above partition regularity property
is also true if we replace the integers $\Z$ with the natural numbers $\N$.
\end{theorem}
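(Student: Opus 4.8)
The plan is to obtain Theorem~\ref{main3} as the special case $m=1$ of Theorem~\ref{main2}. First I would apply Theorem~\ref{main2} with the given $\ell,k,L$ and with $m=1$, which produces a color $C_i$ and elements $a_0,a_1\in C_i$ satisfying $(\ell a_0+k),(\ell a_1+k)\notin\{0,1,-1\}$ and, for every $n_0\in\{0,1,\ldots,L\}$,
$$\frac{1}{\ell}\big((\ell a_1+k)(\ell a_0+k)^{n_0}-k\big)\in C_i.$$
Relabeling $a:=a_1$ and $b:=a_0$, the term with $n_0=0$ is just $a$, while the terms with $n_0=1,\ldots,L$ are exactly the entries $\frac{1}{\ell}\big((\ell a+k)(\ell b+k)^{n_0}-k\big)$ appearing in the statement; adding $b\in C_i$, all listed elements lie in the single color $C_i$. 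The $\N$-version needs no separate work, since for positive $\ell$ Theorem~\ref{main2} already yields the configuration inside $\N$.

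It remains to secure pairwise distinctness, and here the hypothesis $(\ell a+k),(\ell b+k)\notin\{0,1,-1\}$ is exactly what is needed. Put $u:=\ell a+k$ and $v:=\ell b+k$, so that $|u|,|v|\ge 2$, and note that each configuration term for $n_0=0,1,\ldots,L$ is the injective affine image $t\mapsto(t-k)/\ell$ of $u\,v^{n_0}$. Since $|v|\ge 2$, the powers $v^{n_0}$ have strictly increasing modulus and are therefore distinct, and multiplying by $u\ne 0$ keeps the products $u\,v^{n_0}$ distinct; hence the $L+1$ terms indexed by $n_0$ (which include $a$) are pairwise distinct. For $b$, the equation $b=\frac{1}{\ell}(u\,v^{n_0}-k)$ reduces to $u\,v^{n_0-1}=1$ when $n_0\ge 1$: for $n_0=1$ this forces $u=1$, which is excluded, and for $n_0\ge 2$ one has $|u\,v^{n_0-1}|\ge 4$, so it cannot equal $1$. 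Thus $b$ meets none of the terms with $n_0\ge 1$.

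The single collision not ruled out by this algebra is $a=b$ (equivalently $u=v$, the case $n_0=0$), and this is the step I expect to require the most care, since it is a genericity condition rather than a consequence of the closed form. I would remove it by using the same freedom that already underlies the clause ``we can assume $(\ell a_j+k)\neq 0,1,-1$'' in Theorem~\ref{main2}: the ultrafilter construction places the generators inside prescribed members of nonprincipal ultrafilters, so one may impose the extra cofinite, hence ultrafilter-large, condition $a_0\ne a_1$ without disturbing the monochromaticity. With $a\ne b$ thereby guaranteed, all $L+2$ displayed elements are pairwise distinct, completing the reduction.
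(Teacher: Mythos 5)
Your reduction of Theorem \ref{main3} to Theorem \ref{main2} with $m=1$ is essentially the paper's own route (the paper applies the underlying Bergelson--Johnson--Moreira result, Theorem \ref{bjm}, directly), and your modulus argument with $|\ell a+k|,|\ell b+k|\ge 2$ correctly rules out every collision except $a=b$. That last step, however, is a genuine gap. Theorem \ref{main2} is proved in the paper by invoking Theorem \ref{bjm} as a black box, quoted from \cite[Corollary 3.7]{bjm}; that statement guarantees only that the elements $a_0,\ldots,a_m$ differ from the identity $u$ of $(\Z',\lk)$, and nothing in it prevents $a_0=a_1$. Moreover, the clause ``we can assume $(\ell a_j+k)\ne 0,1,-1$'' is not a genericity phenomenon coming from ultrafilter-largeness: it is obtained algebraically, by running Theorem \ref{bjm} inside the subsemigroup $\Z'=\Z\setminus\{-\frac{k+1}{\ell},-\frac{k}{\ell}\}$ (which excludes the elements mapping to $0$ and $-1$) together with the condition $a_j\ne u$ supplied by that theorem. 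So your proposal to ``impose the extra cofinite condition $a_0\ne a_1$'' appeals to internal freedom in a construction that appears nowhere in the paper and is not available to you; as written, the step does not go through.

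The missing idea --- and the one the paper uses --- is purely algebraic and costs only a shift of the exponent range. The paper applies Theorem \ref{bjm} with the endomorphisms $f_j(b)=b^{(j)}$ for $j=0,1,\ldots,L+1$ (note: $L+1$, not $L$), obtaining monochromatic elements $b,\,a',\,a'\lk b,\,\ldots,\,a'\lk b^{(L+1)}$ with $a',b\ne u$, and then defines $a:=a'\lk b$ rather than taking the element handed to it by the theorem. Cancellativity in $(\Z',\lk)$ (Proposition \ref{lk}) then forces $a=a'\lk b\ne u\lk b=b$, while each $a\lk b^{(s)}=a'\lk b^{(s+1)}$ still lies in the configuration. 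Transported to your formulation: apply Theorem \ref{main2} with $L+1$ in place of $L$, set $b:=a_0$, and let $a$ be the $n_0=1$ term $\frac{1}{\ell}\big((\ell a_1+k)(\ell a_0+k)-k\big)$. Then $\ell a+k=(\ell a_1+k)(\ell a_0+k)\ne \ell b+k$ because $\ell a_1+k\ne 1$ and $\ell a_0+k\ne 0$, the terms $\frac{1}{\ell}\big((\ell a+k)(\ell b+k)^{n}-k\big)$ for $n\le L$ are exactly the Theorem \ref{main2} terms with $n_0=n+1\le L+1$, and your modulus argument (now with $|\ell a+k|\ge 4$) completes the pairwise distinctness.
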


\begin{example}
In the simplest case when $\ell=k=1$, for every $L\in\N$
one obtains the following monochromatic pattern in the natural numbers,
where all elements are distinct:
\begin{multline*}
a,\ b,\ a+b+ab,\ ab^2+b^2+2ab+2b+a,
\\
ab^3+b^3+3ab^2+3b^2+3ab+3b+a,\ \ldots\ ,\ (a+1)(b+1)^L-1.
\end{multline*}
\end{example}

\medskip
\section{Symmetric patterns and Milliken-Taylor Theorem}

Below, we denote by $[\N]^m$ 
the family $=\{F\subset \N\mid |F|=m\}$ of all subsets of $\N$ of cardinality $m$;
and for nonempty finite $F,G\subset\N$ we
write $F<G$ to mean that $\max F<\min G$.

The following result, that was independently proved by
Milliken and Taylor soon after Hindman proved his Finite Sums Theorem, 
is a common strengthening of Hindman and Ramsey Theorems.

\begin{itemize}
\item
Milliken-Taylor Theorem (1975) \cite{mi,ta}:
\emph{For every finite coloring $[\N]^m=C_1\cup\ldots\cup C_r$
there exist an injective sequence $(x_n)_{n=1}^\infty$ of natural numbers
and a color $C_i$ such that
$$\left\{  \{ x_{F_1},\ldots,x_{F_m} \} \mid F_1<\ldots< F_m \right\}\subseteq C_i$$
where for $F=\{n_1<\ldots<n_s\}\subset\N$ we denoted
$x_F=x_{n_1}+\ldots+x_{n_s}$.}
\end{itemize}

Clearly, when $m=1$ one obtains Hindman Theorem;
and when all $F_j$ are singletons, one obtains Ramsey Theorem.

Similarly to Hindman Theorem and van der Waerden Theorem, also 
Milliken-Taylor Theorem has an analogue with our symmetric patterns.

For convenience, we now extend the definition of 
$(\ell,k)$-symmetric systems $\mathfrak{S}_{\ell,k}$
to cases where $k=0$ or $\ell=0$,
so as to also include the usual finite products and finite sums.

\begin{definition}\label{def-symmetricpolynomial2}
For integers $\ell\ne 0$ we set:
\begin{itemize}
\item
$\mathfrak{S}_{\ell,0}(a_1,\ldots,a_n)=a_1\,\textcircled{$\star$}_{\ell,0}\cdots\,\textcircled{$\star$}_{\ell,0}a_n=
\ell^{n-1}a_1\cdots a_n$.
\end{itemize}
We also set:
\begin{itemize}
\item
$\mathfrak{S}_{0,1}(a_1,\ldots,a_n)=a_1\,\textcircled{$\star$}_{0,1}\cdots\,\textcircled{$\star$}_{0,1}a_n=
a_1+\ldots+a_n$.
\end{itemize}
\end{definition}

%
%


\begin{theorem}\label{main4}
Assume that
\begin{itemize}
\item[$(a)$]
$(\ell_j,k_j)_{j=1}^m$ is a finite sequence of pairs of integers 
where for every $j=1,\ldots,m$,
either $\ell_j\ne 0$ divides $k_j(k_j-1)$ or $(\ell_j,k_j)=(0,1)$;
\item[$(b)$]
$f:\Z^m\to\Z$ is any function.
\end{itemize}
Then for every finite coloring $\Z=C_1\cup\ldots\cup C_r$
there exist injective sequences
$(x^{(j)}_n)_{n=1}^\infty$ for $j=1,\ldots,m$ and
there exists a color $C_i$ with the properties that
$(x^{(j)}_n)_{n=1}^\infty=(x^{(j')}_n)_{n=1}^\infty$
whenever $(\ell_j,k_j)=(\ell_{j'},k_{j'})$, and 
$$\left\{f\left(x^{(1)}_{F_1},\ldots,x^{(m)}_{F_m}\right)\,\Big|\, F_1<\ldots<F_m\right\}\ \subseteq\ C_i,$$
where for $F=\{n_1<\ldots<n_s\}\subset\N$ we denoted
$x^{(j)}_F=\mathfrak{S}_{\ell_j,k_j}(x^{(j)}_{n_1},\ldots,x^{(j)}_{n_s})$.

Moreover, if we also assume that all $\ell_j\ge 0$ and 
if $f:\N^m\to\Z$ satisfies the condition:
\begin{itemize}
\item[$(\dagger)$]
$\exists\,\overline{n}_1\ \forall n_1\ge\overline{n}_1\ 
\exists\,\overline{n}_2\ \forall n_2\ge\overline{n}_2\ \ldots\
\exists\,\overline{n}_m\ \forall n_m\ge\overline{n}_m$ one has
that $f(n_1,n_2,\ldots,n_m)\in\N$,
\end{itemize}
then the above partition regularity property
is also true if we replace the integers $\Z$ with the natural numbers $\N$.
\end{theorem}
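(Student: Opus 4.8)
The plan is to prove Theorem \ref{main4} by algebra in $\beta\Z$, reducing the Milliken–Taylor-style statement to the existence of suitable idempotent ultrafilters for the operations $\lk$, which by the earlier development (\S\ref{operationslk}) are commutative associative operations on $\Z$ coming from affine transformations of $(\Z,\cdot)$ or $(\Z,+)$. First I would recall the standard correspondence: for each pair $(\ell_j,k_j)$ the operation $\lk[j]$ extends to the semigroup $(\beta\Z,\lk[j])$, whose operation is the right-continuous extension, and which therefore contains a minimal idempotent $p_j$. The condition that $\ell_j$ divides $k_j(k_j-1)$ is exactly what guarantees $\lk[j]$ maps $\Z$ into $\Z$ (as noted after Definition \ref{def-symmetricpolynomial}), so the machinery applies; the degenerate cases $(\ell_j,k_j)=(0,1)$ and $(\ell,0)$ recover $(\beta\Z,+)$ and $(\beta\Z,\cdot)$. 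The key point is that an idempotent $p_j$ furnishes, via the Galvin–Glazer argument, an injective sequence $(x^{(j)}_n)$ all of whose $\lk[j]$-finite combinations $x^{(j)}_F=\mathfrak{S}_{\ell_j,k_j}(x^{(j)}_{n_1},\ldots,x^{(j)}_{n_s})$ lie in any prescribed member of $p_j$.

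The heart of the proof is to handle the $m$-fold iterated structure indexed by $F_1<\ldots<F_m$ together with the arbitrary combining function $f$. Here I would form the tensor product ultrafilter $p_1\otimes\cdots\otimes p_m$ on $\Z^m$ and consider its push-forward $f(p_1\otimes\cdots\otimes p_m)$ along $f:\Z^m\to\Z$, which is an ultrafilter on $\Z$. Given the coloring $\Z=C_1\cup\ldots\cup C_r$, pick the color $C_i$ belonging to this push-forward ultrafilter; then $f^{-1}(C_i)\in p_1\otimes\cdots\otimes p_m$. Unwinding the definition of the tensor product, membership of $f^{-1}(C_i)$ in $p_1\otimes\cdots\otimes p_m$ says precisely that for $p_1$-almost all first coordinates, for $p_2$-almost all second coordinates, $\ldots$, one has $f\in C_i$. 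I would then run the Galvin–Glazer construction simultaneously in the $m$ coordinates, choosing the elements $x^{(m)}_{n}$ last so that $F_m$ sits to the right of $F_{m-1}$, and so on; the ordering constraint $F_1<\ldots<F_m$ is automatically respected because the idempotent construction produces terms along a tail, and the nested ``$p_j$-almost all'' quantifiers match the nesting in the tensor product. The requirement that the sequences coincide when $(\ell_j,k_j)=(\ell_{j'},k_{j'})$ is met by simply reusing the same idempotent $p_j=p_{j'}$ and the same extracted sequence for those indices.

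The main obstacle, and the step I would spend the most care on, is \textbf{the $\N$-versus-$\Z$ passage under the condition $(\dagger)$}. For positive $\ell_j$ the operations $\lk[j]$ restrict to $\N$ and the idempotents can be taken in $\beta\N$, so the coordinatewise combinations stay in $\N$; the genuine issue is that $f$ need not map $\N^m$ into $\N$. Condition $(\dagger)$ is a nested-tail statement: there is $\overline n_1$ so that for $n_1\ge\overline n_1$ there is $\overline n_2$ so that for $n_2\ge\overline n_2$, and so on, $f(n_1,\ldots,n_m)\in\N$. The plan is to observe that this is exactly the kind of ``eventually, in the sense of the $\otimes$-quantifiers'' condition that can be absorbed into the tensor product: since each $p_j\in\beta\N$ contains every cofinite subset of $\N$ (the idempotents we use are nonprincipal), the set $\{(n_1,\ldots,n_m):f(n_1,\ldots,n_m)\in\N\}$ is a member of $p_1\otimes\cdots\otimes p_m$ by $(\dagger)$. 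Intersecting $f^{-1}(C_i)$ with this set keeps us inside $p_1\otimes\cdots\otimes p_m$, so the simultaneous extraction can be carried out with all arguments of $f$ large enough to guarantee $f\in\N$, and every listed value $f(x^{(1)}_{F_1},\ldots,x^{(m)}_{F_m})$ then lies in $C_i\cap\N$. Finally, injectivity of each extracted sequence is arranged in the usual way by discarding finitely many initial terms at each stage of the construction, which does not disturb any of the ultrafilter memberships.
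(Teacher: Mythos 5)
Your proposal follows the paper's architecture step for step: idempotent ultrafilters for the operations $\textcircled{$\star$}_{\ell_j,k_j}$ (reused for equal pairs), the tensor product $\U_1\otimes\cdots\otimes\U_m$, push-forward along $f$ to select the color, and extraction of the configuration from $f^{-1}(C_i)$ (which the paper does by citing the Bergelson--Hindman--Williams characterization, Theorem \ref{mt} and Corollary \ref{mt-corollary}, rather than by re-running Galvin--Glazer simultaneously). But there is a genuine gap at your first step. You take $p_j$ to be a \emph{minimal} idempotent of $(\beta\Z,\textcircled{$\star$}_{\ell_j,k_j})$ and assert that an idempotent ``furnishes, via the Galvin--Glazer argument, an injective sequence.'' That is false unless the idempotent is non-principal, and minimality points exactly the wrong way: whenever $\ell_j$ divides $k_j$ --- which hypothesis $(a)$ permits, e.g.\ $(\ell_j,k_j)=(1,0)$ (ordinary multiplication) or $(2,2)$ --- the semigroup $(\Z,\textcircled{$\star$}_{\ell_j,k_j})$ has a zero element $z=-\frac{k_j}{\ell_j}$, the singleton consisting of the principal ultrafilter at $z$ is a two-sided ideal of $(\beta\Z,\textcircled{$\star$}_{\ell_j,k_j})$ and hence is the minimal ideal, so the \emph{unique} minimal idempotent is principal, and Galvin--Glazer applied to it yields only the constant sequence $z,z,z,\ldots$, not an injective one. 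Your parenthetical ``the idempotents we use are nonprincipal'' is precisely the point that must be proved: non-principal idempotents exist here because $\Z\setminus\{-\frac{k_j}{\ell_j}\}$ is a cancellative subsemigroup with finite complement, which makes the non-principal ultrafilters a closed subsemigroup of $\beta\Z$ to which Ellis' Lemma applies. That is the paper's Theorem \ref{nonprincipalidempotents} (resting on Proposition \ref{lk}), and your argument supplies no substitute for it.

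There is a second gap in the passage to $\N$: you claim that for $\ell_j>0$ the operation $\textcircled{$\star$}_{\ell_j,k_j}$ restricts to $\N$. This fails whenever $k_j<0$, which is allowed since $\ell_j=1$ divides every $k_j(k_j-1)$: for $(\ell_j,k_j)=(1,-5)$ one has $1\,\textcircled{$\star$}_{1,-5}\,10=-15\notin\N$, because $(1-5)(10-5)=-20$. The paper repairs this by choosing $M$ with $\ell_j M+k_j\ge 1$ for all $j$ with $\ell_j\ne 0$ and working inside the cofinite cancellative subsemigroup $\N':=\{a\in\Z\mid a\ge M\}$ of each $(\Z,\textcircled{$\star$}_{\ell_j,k_j})$ (Proposition \ref{lk}), taking the thresholds $\overline{n}_j$ in $(\dagger)$ to be at least $M$. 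Your use of $(\dagger)$ itself is sound and matches the paper's Lemma \ref{lemmatensor}: non-principality puts every cofinite subset of $\N$ in each $\U_j$, so the nested quantifiers give $f^{-1}(\N)\in\U_1\otimes\cdots\otimes\U_m$. Finally, note that your one-sentence simultaneous extraction of the configuration is, in substance, a re-proof of \cite[Corollary 3.5]{bhw}; if you do not cite that result as the paper does, you owe a genuine induction, and with $m$ distinct operations in play it is not a routine repetition of the classical Milliken--Taylor argument.
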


Clearly, every function $f:\N^m\to\N$ 
trivially satisfies condition $(\dagger)$; however, we remark that there are
more relevant examples, including a large class of polynomial functions (see below).

\smallskip
Recall that a \emph{multi-index} is a tuple 
$\alpha=(\alpha_1,\ldots,\alpha_m)\in(\N\cup\{0\})^m$.
If $\z=(z_1,\ldots,z_m)$ is a vector of variables 
and $\alpha=(\alpha_1,\ldots,\alpha_m)$ is a multi-index, 
then we write $\z^\alpha$ to denote the monomial 
$\prod_{i=1}^m z_i^{\alpha_i}$.
Polynomials in the variables $z_1,\ldots,z_m$ are written
in the form $P(\z)=\sum_\alpha c_\alpha\z^\alpha$,
where $\alpha$ are multi-indexes
and where $c_\alpha$ are the coefficients of monomials $\z^\alpha$.
The \emph{support} of $P$ is the finite set
$\text{supp}(P)=\{\alpha\mid c_\alpha\ne 0\}$.
Now consider the \emph{anti-lexicographic order} on the multi-indexes,
where for $\alpha\ne\beta$ one sets:
$$(\alpha_1,\ldots,\alpha_m)\ <\ (\beta_1,\ldots,\beta_m)\ \Longleftrightarrow\ 
\alpha_i<\beta_i\ \text{where}\ i=\max\{j\mid \alpha_j\ne\beta_j\}.$$

The \emph{leading term} of a polynomial $P=\sum_\alpha c_\alpha\z^\alpha$
is the monomial $c_\alpha\z^\alpha$ where $\alpha=\max\text{Supp}(P)$
is the greatest multi-index of $P$ according to the anti-lexico\-graphic order.
The \emph{leading coefficient} of $P$ is the coefficient $c_\alpha$ of its leading term.

\begin{proposition}\label{polynomials1}
Let $P\in\Z[z_1,\ldots,z_m]$
be a polynomial in several variables over the integers
with positive leading coefficient. Then
the polynomial function $P(z_1,\ldots,z_m)$ satisfies condition $(\dagger)$ of
Theorem \ref{main4}.
\end{proposition}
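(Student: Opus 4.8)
The plan is to prove condition $(\dagger)$ by induction on the number of variables $m$, peeling off one variable at a time starting from the last (highest-indexed) one, which is exactly the variable that the anti-lexicographic order treats as most significant. The key structural fact I would exploit is that the anti-lexicographic order was defined precisely so that the leading term is the monomial whose exponent on $z_m$ is as large as possible; consequently, if I collect $P$ as a polynomial in $z_m$ with coefficients in $\Z[z_1,\ldots,z_{m-1}]$, writing
$$P(z_1,\ldots,z_m)\ =\ \sum_{d=0}^{D} Q_d(z_1,\ldots,z_{m-1})\, z_m^{\,d},$$
then the top coefficient $Q_D$ is itself a polynomial in $m-1$ variables whose leading term (again in the anti-lexicographic order on the remaining indices) carries the same positive leading coefficient as $P$. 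This is the observation that makes the induction close.

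Here is how I would run the quantifier alternation in $(\dagger)$. The statement $(\dagger)$ unwinds as: choose a threshold for $n_1$, then for every $n_1$ above it choose a threshold for $n_2$, and so on, ultimately guaranteeing $f(n_1,\ldots,n_m)\in\N$ for all sufficiently large $n_m$. Reading this from the innermost variable outward matches my collection of $P$ in powers of $z_m$. By the inductive hypothesis applied to $Q_D$, which has positive leading coefficient, I may fix $n_1,\ldots,n_{m-1}$ large enough (in the nested sense of $(\dagger)$) so that $Q_D(n_1,\ldots,n_{m-1})$ is a positive integer. Once that leading coefficient is a fixed positive integer, $P(n_1,\ldots,n_{m-1},z_m)$ is a genuine degree-$D$ polynomial in the single variable $z_m$ with positive leading coefficient, hence tends to $+\infty$ and is eventually positive; so there is a threshold $\overline{n}_m$ beyond which $f(n_1,\ldots,n_m)\in\N$. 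The base case $m=1$ is exactly the elementary fact that a one-variable integer polynomial with positive leading coefficient is eventually positive on $\N$.

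The one point deserving care, and the main obstacle, is verifying that the nested quantifier structure of $(\dagger)$ is genuinely compatible with the induction, rather than merely producing a single uniform threshold. The subtlety is that the threshold $\overline{n}_m$ for $z_m$ is allowed to depend on the already-chosen values $n_1,\ldots,n_{m-1}$, and indeed it must, since the coefficients $Q_d(n_1,\ldots,n_{m-1})$ of the lower powers of $z_m$ vary with those values. Fortunately the $\exists\forall$ shape of $(\dagger)$ permits exactly this dependence: at each stage I commit to a threshold, then am handed an arbitrary value above it, and only afterwards must produce the next threshold. So the argument is to feed the concrete values $n_1,\ldots,n_{m-1}$ (supplied by the inductive unwinding) into the single-variable fact for $z_m$, obtaining a threshold that legitimately depends on them.

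One additional check I would fold in: I must confirm that the inductive hypothesis is applicable to $Q_D$, i.e.\ that $Q_D$ is not the zero polynomial and that its leading coefficient is positive. Both follow because $D=\max\{\alpha_m\mid\alpha\in\text{supp}(P)\}$ is the highest power of $z_m$ actually occurring, so $Q_D\neq 0$, and the leading term of $P$ lies inside the block of monomials with $z_m$-exponent equal to $D$; hence $\max\text{supp}(Q_D)$ is the projection of $\max\text{supp}(P)$ onto the first $m-1$ coordinates, and the two leading coefficients coincide. With this verified, the induction goes through and delivers $(\dagger)$.
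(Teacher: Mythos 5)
Your proof is correct and follows essentially the same route as the paper's: induction on the number of variables, collecting $P$ in powers of the last variable, applying the inductive hypothesis to the top coefficient polynomial (whose leading coefficient agrees with that of $P$ by the anti-lexicographic order), and finishing with the one-variable eventual-positivity fact, with the thresholds depending on earlier choices exactly as $(\dagger)$ permits. The only cosmetic difference is that the paper states the inductive claim for real polynomials and then restricts to integer points, while you work over $\Z$ directly; this changes nothing of substance.
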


\begin{proof}
It is a straightforward consequence of the following general property of polynomials,
restricted to variables that are natural numbers:
\begin{itemize}
\item
If $P\in\R[z_1,\ldots,z_m]$ has positive leading coefficient, then:
\\
$\exists\,\overline{x}_1\ \forall x_1\ge\overline{x}_1\
\exists\,\overline{x}_2\ \forall x_2\ge\overline{x}_2\ \, \ldots\ \,
\exists\,\overline{x}_m\ \forall x_m\ge\overline{x}_m$ 
one has that
$P(x_1,x_2,\ldots,x_m)>0$.
\end{itemize}

In the base case of a single variable, let $P(z)=\sum_{j=1}^d c_j z^j$ where
the leading coefficient $c_d>0$. Then $\lim_{x\to +\infty}P(x)=+\infty$,
and so there exists $\overline{x}_1$ such that $P(x_1)>0$ for all $x_1\ge\overline{x}_1$.

At the inductive step, let $P=\sum_\alpha c_\alpha\z^\alpha\in\R[z_1,\ldots,z_m,z_{m+1}]$,
where the leading term $c_\gamma\z^\gamma$ has positive coefficient $c_\gamma>0$.
If $\gamma=(\beta_1,\ldots,\beta_m,d)$, then
we can write $P=\sum_{j=0}^d P_j\cdot(z_{m+1})^j$ for suitable polynomials
$P_j\in\R[z_1,\ldots,z_m]$ for $j=0,\ldots,d$.
Notice that the leading term of $P_d$ is $c_\gamma\z^\beta$
where $\beta=(\beta_1,\ldots,\beta_m)$.
By the inductive hypothesis applied to $P_d$, we have that
$\exists\,\overline{x}_1\ \forall x_1\ge\overline{x}_1\
\exists\,\overline{x}_2\ \forall x_2\ge\overline{x}_2\ \, \ldots\ \,
\exists\,\overline{x}_m\ \forall x_m\ge\overline{x}_m$ 
one has that $P_d(x_1,x_2,\ldots,x_m)>0$.
Given any $x_1,\ldots,x_m$ as above, consider the polynomial
$Q(z):=P(x_1,\ldots,x_m,z)\in\R[z]$. Notice that
$Q(z)=\sum_{j=0}^d a_j z^j$
where $a_j:=P_j(x_1,x_2,\ldots,x_m)$.
Since the leading term $a_d=P_d(x_1,x_2,\ldots,x_m)$ is positive, 
$\lim_{x\to +\infty}Q(x)=+\infty$
and so there exists $\overline{x}_{m+1}$ such that for every 
$x_{m+1}\ge \overline{x}_{m+1}$
one has that $Q(x_{m+1})=P(x_1,\ldots,x_m,x_{m+1})>0$, 
as desired.
\end{proof}

For the natural numbers $\N$, we can prove a modified
version of the previous theorem where a smaller class of $(\ell_j,k_j)$
is allowed, but where a larger class of functions $f$ is considered.

\begin{theorem}\label{main5}
Assume that
\begin{itemize}
\item[$(a)$]
$(\ell_j,k_j)_{j=1}^m$ is a finite sequence of pairs of integers 
where  for every $j=1,\ldots,m$, either $\ell_j>0$ and $k_j\in\{0,1\}$,
or $(\ell_j,k_j)=(0,1)$;
\item[$(b)$]
$f$ is an $m$-variable function that satisfies the following property,
where all variables $n_j,\overline{n}_j,N_j\in\N$:
\begin{itemize}
\item[$(\ddagger)$]
$\exists\,\overline{n}_1,\!N_1\ \forall n_1\ge\overline{n}_1\ 
\exists\,\overline{n}_2,\!N_2\ \forall n_2\ge\overline{n}_2\ \ldots\
\exists\,\overline{n}_m,\!N_m\ \forall n_m\ge\overline{n}_m$
one has
that $f(n_1N_1,n_2 N_2,\ldots,n_m N_m)\in\N$.
\end{itemize}
\end{itemize}
Then for every finite coloring $\N=C_1\cup\ldots\cup C_r$
there exist injective sequences
$(x^{(j)}_n)_{n=1}^\infty$ for $j=1,\ldots,m$ and
and there exists a color $C_i$ with the properties that
$(x^{(j)}_n)_{n=1}^\infty=(x^{(j')}_n)_{n=1}^\infty$
whenever $(\ell_j,k_j)=(\ell_{j'},k_{j'})$, and
$$\left\{f\left(x^{(1)}_{F_1},\ldots,x^{(m)}_{F_m}\right)\,\Big|\, F_1<\ldots<F_m\right\}\ \subseteq\ C_i,$$
where for $F=\{n_1<\ldots<n_s\}\subset\N$ we denoted
$x^{(j)}_F=\mathfrak{S}_{\ell_j,k_j}(x^{(j)}_{n_1},\ldots,x^{(j)}_{n_s})$.
\end{theorem}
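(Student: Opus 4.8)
The plan is to follow the ultrafilter proof of Theorem~\ref{main4}, changing only the idempotent ultrafilters that govern the construction. The single new ingredient is the following remark, which explains why hypothesis $(a)$ forces $k_j\in\{0,1\}$: writing the binary operation as $a\,\textcircled{$\star$}_{\ell,k}\,b=\ell ab+k(a+b)+\tfrac{k(k-1)}{\ell}$, one sees that for every $N\in\N$ the set $N\N$ of multiples of $N$ is a sub-semigroup of $(\N,\textcircled{$\star$}_{\ell,k})$ \emph{precisely} when the constant term $\tfrac{k(k-1)}{\ell}$ vanishes, i.e.\ when $k\in\{0,1\}$; for the addition operation $(0,1)$ the set $N\N$ is of course closed as well. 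This divisibility closure is exactly what will let the weaker hypothesis $(\ddagger)$ replace $(\dagger)$.

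First I would fix, for each $j$, an idempotent $q_j$ of the semigroup $(\beta\N,\textcircled{$\star$}_{\ell_j,k_j})$ lying in the sub-semigroup $\bigcap_{N\in\N}\overline{N\N}$. The sets $\overline{N\N}$ are nonempty closed sub-semigroups forming a downward-directed family (ordered by divisibility, with $\overline{N_1\N}\cap\overline{N_2\N}\supseteq\overline{\mathrm{lcm}(N_1,N_2)\N}$), so by compactness of $\beta\N$ their intersection is a nonempty closed sub-semigroup and hence, by Ellis' theorem, contains an idempotent $q_j$. By construction $N\N\in q_j$ for \emph{every} $N$ (in particular each $q_j$ is nonprincipal), so no matter which scaling the condition $(\ddagger)$ eventually demands, the corresponding set of multiples is $q_j$-large. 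Exactly as in Theorem~\ref{main4}, I would take $q_j=q_{j'}$ whenever $(\ell_j,k_j)=(\ell_{j'},k_{j'})$, which forces the associated sequences to coincide.

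Next I would pull the coloring back through $f$ along the tensor product $Q=q_1\otimes\cdots\otimes q_m$ on $\N^m$, whose outer-to-inner order $q_1,\dots,q_m$ matches both the block order $F_1<\cdots<F_m$ and the quantifier alternation of $(\ddagger)$. The key point is that the set $G=\{\z\in\N^m:f(\z)\in\N\}$ belongs to $Q$: in the iterated-limit description of $Q$ one chooses, at the outer layer, $z_1\in N_1\N$ with $z_1/N_1\ge\overline{n}_1$ (possible since $N_1\N\in q_1$ and $q_1$ is nonprincipal), then for each such $z_1$ one chooses $z_2\in N_2\N$ with $z_2/N_2\ge\overline{n}_2$, and so on, with $\overline{n}_j,N_j$ read off from $(\ddagger)$; the resulting nested set lies in $G\cap Q$. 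Since $c\circ f$ is an $r$-coloring of $G$ and $Q$ is an ultrafilter, some class $A=\{\z\in G:c(f(\z))=i\}$ belongs to $Q$. Running the \emph{same} simultaneous Galvin--Glazer extraction as in the proof of Theorem~\ref{main4}, the idempotency of the $q_j$ supplies Hindman-type closure of each coordinate under $\textcircled{$\star$}_{\ell_j,k_j}$ while the tensor order enforces $F_1<\cdots<F_m$, and one builds injective sequences $(x^{(j)}_n)_{n}$ with $(x^{(1)}_{F_1},\dots,x^{(m)}_{F_m})\in A$ for all $F_1<\cdots<F_m$. As $A\subseteq G$, every value $f(x^{(1)}_{F_1},\dots,x^{(m)}_{F_m})$ lies in $\N$ and has color $i$, as required.

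I expect the only real difficulty to be the bookkeeping that aligns the adaptive, nested scalings $N_j$ of $(\ddagger)$ with the layered extraction: one must check that at the stage where the $j$-th block value is produced the data $\overline{n}_j,N_j$ are already fixed by the earlier block values, and that $N\N\in q_j$ for all $N$ lets that block value be chosen simultaneously large enough and divisible by the required $N_j$ without disturbing the idempotent closures used in the remaining coordinates. These divisibility idempotents dissolve exactly that difficulty, and selecting them is the sole departure from the proof of Theorem~\ref{main4}.
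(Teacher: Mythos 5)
Your proposal is correct and follows essentially the same route as the paper's proof: your divisibility idempotents $q_j\in\bigcap_{N\in\N}\overline{N\N}$ are exactly the ultrafilters the paper constructs in Lemma \ref{lemma-tN} (Ellis' Lemma applied to the closed sub-semigroup $\bigcap_{t\in\N}\mathcal{O}_{t\N}$, which is closed under $\textcircled{$\star$}_{\ell,k}$ precisely because $k\in\{0,1\}$ kills the constant term), and your nested verification that $\{\z\in\N^m\mid f(\z)\in\N\}$ belongs to $q_1\otimes\cdots\otimes q_m$ is the content of Lemma \ref{lemmatensor}(2). Both arguments then conclude identically by pushing the tensor product forward along $f$ and invoking the Bergelson--Hindman--Williams extraction of Corollary \ref{mt-corollary}.
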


The functions that satisfy condition $(\ddagger)$ above include
a large class of polynomial functions with \emph{rational} coefficients.


\begin{proposition}\label{polynomials}
Let $P(z_1,\ldots,z_m)\in\Q[z_1,\ldots,z_m]$
be a polynomial in several variables over the the rational numbers
with positive leading coefficient and no constant term. Then
the polynomial function $P(z_1,\ldots,z_m)$ satisfies condition 
$(\ddagger)$ of
Theorem \ref{main5}.
\end{proposition}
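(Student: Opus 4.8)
The plan is to reduce everything to Proposition \ref{polynomials1} after clearing denominators by a suitable dilation, exploiting the fact that condition $(\ddagger)$ lets us rescale each variable by a freely chosen positive integer $N_j$. The observation driving the whole argument is that the hypothesis ``no constant term'' is precisely what makes such a clearing of denominators possible.

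First I would fix a common denominator. Since $P$ has only finitely many nonzero coefficients $c_\alpha\in\Q$, there is a positive integer $D$ with $D\,c_\alpha\in\Z$ for every $\alpha\in\text{supp}(P)$. I then set $N_1=\ldots=N_m=D$ and pass to the dilated polynomial $Q(\z):=P(D z_1,\ldots,D z_m)$. Writing $P=\sum_\alpha c_\alpha\z^\alpha$, the monomial of $Q$ attached to the multi-index $\alpha$ is $c_\alpha D^{|\alpha|}\z^\alpha$, where $|\alpha|=\alpha_1+\ldots+\alpha_m$.

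The crux is integrality. Because $P$ has no constant term, every $\alpha\in\text{supp}(P)$ satisfies $|\alpha|\ge 1$, hence $c_\alpha D^{|\alpha|}=(D\,c_\alpha)\,D^{|\alpha|-1}\in\Z$. Therefore $Q\in\Z[z_1,\ldots,z_m]$, so in particular $Q(n_1,\ldots,n_m)=P(n_1 D,\ldots,n_m D)\in\Z$ for all integers $n_1,\ldots,n_m$. I would stress that this step fails for a polynomial with a nonzero rational constant term $c_0$, since the dilation leaves $c_0$ untouched and it could never be turned into an integer; this is exactly why the hypothesis is needed.

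Finally I would invoke positivity. The substitution $z_i\mapsto D z_i$ multiplies each coefficient by the positive number $D^{|\alpha|}$ and so changes neither the support nor the anti-lexicographic ordering of multi-indices; thus $Q$ has the same leading multi-index $\gamma=\max\text{supp}(P)$ as $P$, with leading coefficient $c_\gamma D^{|\gamma|}>0$. Hence $Q$ has positive leading coefficient, and applying the general polynomial property established in the proof of Proposition \ref{polynomials1} to $Q$ yields thresholds $\overline{n}_1,\ldots,\overline{n}_m$, arranged in the required nested-quantifier form, such that $Q(n_1,\ldots,n_m)>0$ whenever $n_j\ge\overline{n}_j$. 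Combining this with integrality gives $f(n_1 N_1,\ldots,n_m N_m)=Q(n_1,\ldots,n_m)\in\N$ for all such $n_j$, with $N_j=D$; this is precisely condition $(\ddagger)$. The only genuine subtlety is the interlocking of the integrality and positivity requirements inside the nested quantifiers of $(\ddagger)$, but since the dilation factors $N_j=D$ are fixed once and for all while the positivity thresholds are produced step by step exactly as in Proposition \ref{polynomials1}, the quantifier structure matches with no further work.
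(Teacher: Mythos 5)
Your proof is correct and takes essentially the same route as the paper's: both clear denominators with a single common dilation factor $N_j=D$ for all $j$, use the absence of a constant term to conclude that the dilated polynomial $P(Dz_1,\ldots,Dz_m)$ has integer coefficients, and then invoke Proposition \ref{polynomials1} to get the nested positivity thresholds. Your explicit remarks that the dilation preserves the leading multi-index (hence positivity of the leading coefficient) and that the fixed $N_j$ fit the quantifier structure of $(\ddagger)$ merely spell out what the paper leaves implicit.
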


\begin{proof}
Let $P(\z)=\sum_\alpha c_\alpha\z^\alpha\in\Q[z_1,\ldots,z_m]$. 
We will prove the following property:
\begin{itemize}
\item
$\exists\,N\in\N$ such that $\exists\,\overline{n}_1\ \forall n_1\ge\overline{n}_1\
\exists\,\overline{n}_2\ \forall n_2\ge\overline{n}_2\ \, \ldots\ \,
\exists\,\overline{n}_m\ \forall n_m\ge\overline{n}_m$ 
one has that $P(n_1N,n_2N,\ldots,n_mN)\in\N$.
\end{itemize}

Since all $c_\alpha\in\Q$ we can pick $N\in\N$ such that
$N\cdot c_\alpha\in\Z$ for every $\alpha\in\text{Supp}(P)$.
Then consider the polynomial 
$$P'(z_1,\ldots,z_m):=P(z_1N,\ldots,z_mN),$$
that is, $P'(\z)=\sum_\alpha c'_\alpha\z^\alpha$ where
for every multi-index $\alpha=(\alpha_1,\ldots,\alpha_n)$, it is
$c'_\alpha=N^{\alpha_1+\ldots+\alpha_n}c_\alpha$.
Since $P$ has no constant term, then it is readily verified
that $P'\in\Z[z_1,\ldots,z_m]$. Then, 
by the previous Proposition \ref{polynomials1} applied to $P'$, we obtain that:
\begin{itemize}
\item
$\exists\,\overline{n}_1\ \forall n_1\ge\overline{n}_1\
\exists\,\overline{n}_2\ \forall n_2\ge\overline{n}_2\ \, \ldots\ \,
\exists\,\overline{n}_m\ \forall n_m\ge\overline{n}_m$ 
one has that 
$P'(n_1,\ldots,n_m)>0$, and hence $P(n_1 N,n_2 N,\ldots,n_m N)\in\N$.
\end{itemize}
\end{proof}

Let us now see a few particular cases 
of Theorems \ref{main4} and \ref{main5}. 
The examples presented below are not necessarily the 
most relevant or interesting;
rather, they have been chosen with the only intent of giving
the flavor of the kind of configurations that one can obtain.

\begin{example}\ \!\!\!\!\!\footnote
{~Compare to \cite[Thm.\,1.13]{bhw}.}
Let $f:\N^3\to\Z$ be the polynomial function $f(z_1,z_2,z_3)=-3 z_1+2 z_2 z_3$.
For all $(\ell_1,k_1), (\ell_2,k_2), (\ell_3,k_3)$
where either $\ell_j>0$ divides $k_j(k_j-1)$ or $(\ell_j,k_j)=(0,1)$,
Theorem \ref{main4} applies.
E.g., let us take $(\ell_1,k_1)=(0,1)$, $(\ell_2,k_2)=(1,1)$, and $(\ell_3,k_3)=(\ell_1,k_1)=(0,1)$.
Notice that the leading term of $f$, namely $2z_2 z_3$, has positive leading coefficient
and so, by Proposition \ref{polynomials1}, condition $(\dagger)$ is satisfied.
Then we obtain the following infinite monochromatic pattern 
in the natural numbers, where the sequences $(x_n)_{n=1}^\infty$ and
$(y_n)_{n=1}^\infty$ are injective:
$$\left\{-3\sum_{s\in F_1}\!x_s+
2\left(\sum_{\emptyset\ne G\subseteq F_2}
\prod_{t\in G}y_t\right)\!\left(\sum_{u\in F_3}x_u\right)\ 
\Bigg | \ F_1<F_2<F_3\right\}.$$
In particular, if $a, b, c, d$ are the first elements of the sequence $(x_n)_{n=1}^\infty$,
$d', e, f, g$ are the first elements of the sequence $(y_n)_{n=1}^\infty$, and we only
consider those $F_1<F_2<F_3$ which are nonempty subsets of
$\{1,2,3,4\}$, then we obtain the following monochromatic pattern
in the natural numbers:\footnote
{~We can assume without loss of generality that $a,b,c,d,e,f,g$ are pairwise distinct.
The six elements of the pattern correspond to the following six choices
of $F_1<F_2<F_3$, respectively:
\begin{multline*}
\{1\}<\{2\}<\{3\},\ \{1\}<\{2\}<\{4\},\ \{2\}<\{3\}<\{4\},
\\
\{1,2\}<\{3\}<\{4\},\ \{1\}<\{2,3\}<\{4\},\ \{1\}<\{2\}<\{3,4\}.
\end{multline*}}
\begin{multline*}
-3a+2ce,\ -3a+2de,\ -3b+2df,
\\
-3a-3b+2df,\ -3a+2de+2df+2def,\ -3a+2ce+2de.
\end{multline*}
\end{example}

\begin{example}
Let $(\ell_j,k_j)=(1,1)$ for $j=1,\ldots,m$, and let $f$ be any linear function
$f(z_1,\ldots,z_m)=\sum_{j=1}^m c_j z_j$ with coefficients $c_j\in\Q$
and where $c_m>0$. Then by Theorem \ref{main5} 
we have the following infinite monochromatic pattern in
the natural numbers where the sequence $(x_n)_{n=1}^\infty$ is injective:
$$\left\{\sum_{j=1}^m c_j\left(\sum_{\emptyset\ne G\subseteq F_j}
\prod_{s\in G}x_s\right)\,\Bigg|\, F_1<\ldots<F_m\right\}.$$
\end{example}
\noindent
For instance, when $m=3$, the following elements are monochromatic:
\begin{itemize}
\item
$c_1 x_s+c_2 x_t+c_3 x_u$ 
for all $s<t<u$;
\item
$c_1 (x_s+x_t+x_s x_t)+c_2 x_u+c_3 x_v$,
$c_1 x_s+c_2 (x_t+x_u+x_t x_u)+c_3 x_v$, 
$c_1 x_s+c_2 x_t+c_3(x_u+x_v +x_u x_v)$ 
for all $s<t<u<v$; 
\item
$c_1 (x_s+x_t+x_u+x_s x_t+x_s x_u +x_t x_u+x_s x_t x_u)+c_2 x_v+c_3 x_w$,
\\
$c_1 (x_s+x_t+x_s x_t)+c_2 (x_u+x_v+x_u x_v)+c_3 x_w$,
\\
$c_1 (x_s+x_t+x_s x_t)+c_2 x_u+c_3 (x_v+x_w+x_v x_w)$,
\\
$c_1 x_s+c_2 (x_t+x_u+x_v+x_t x_u+x_t x_v+x_u x_v+x_t x_u x_v)+c_3 x_w$,
\\
$c_1 x_s+c_2 (x_t+x_u+x_t x_u)+c_3(x_v+x_w+x_u x_w)$,
\\
$c_1 x_s +c_2 x_t+ c_3(x_u+x_v+x_w+x_u x_v+x_u x_w+x_v x_w+x_u x_v x_w)$ 
\\
for all $s<t<u<v<w$; and so forth.
\end{itemize}

\begin{example}
Let $f:\N^3\to\Q$ be the function 
$$f(z_1,z_2,z_3)\ =\ -\frac{11}{5}z_1^3+\frac{1}{3}\cdot \frac{z_3}{z^2_2}.$$
Observe that $f$ satisfies condition $(\ddagger)$ of Theorem \ref{main5}
because, by letting $\overline{n}_1=1$, $N_1=5$, $\overline{n}_2=1$, $N_2=1$, 
$\overline{n}_3=275 n_1^3+1$, and $N_3=3n_2^2$, the
following property holds: 
\begin{itemize}
\item
$\exists\,\overline{n}_1,\!N_1\ \forall n_1\ge\overline{n}_1\ 
\exists\,\overline{n}_2,\!N_2\ \forall n_2\ge\overline{n}_2\ \
\exists\,\overline{n}_3,\!N_3\ \forall n_3\ge\overline{n}_3$ one has
$f(n_1N_1,n_2N_2,n_3N_3)\in\N$.
\end{itemize}
Indeed,
$$f(n_1N_1,n_2 N_2,n_3N_3)\ =\ 
-\frac{11}{5}(n_1 5)^3+
\frac{1}{3}\cdot\frac{n_3\cdot(3n_2^2)}{n_2^2}\ =\ -275n_1^3+n_3\in\N.$$

If we consider $(\ell_1,k_1)=(\ell_2,k_2)=(1,0)$ and $(\ell_3,k_3)=(1,1)$
then, by Theorem \ref{main5}, we obtain the following infinite monochromatic pattern in
the natural numbers, where the sequences $(x_n)_{n=1}^\infty$ 
and $(y_n)_{n=1}^\infty$ are injective:
$$\left\{-\frac{11}{5}\left(\prod_{s\in F_1}x_s\right)^{\!\!3}+
\frac{1}{3}\cdot\frac{\sum_{\emptyset\ne G\subseteq F_3}
\prod_{u\in G}y_u}{\left(\prod_{t\in F_2}x_t\right)^2}
\Bigg | \ F_1<F_2<F_3\right\}.$$
In particular, if $a, b, c, c'$ are the first elements of the sequence $(x_n)_{n=1}^\infty$,
$c'',c''', d, e$ are the first elements of the sequence $(y_n)_{n=1}^\infty$, and we only
consider those $F_1<F_2<F_3$ which are nonempty subsets of
$\{1,2,3,4\}$, then we obtain the following monochromatic pattern
in the natural numbers:\footnote
{~We can assume without loss of generality that $a,b,c,d,e$ are pairwise distinct.
The six elements of the pattern correspond to the following six choices
of $F_1<F_2<F_3$, respectively:
\begin{multline*}
\{1\}<\{2\}<\{3\},\ \{1\}<\{2\}<\{4\},\ \{2\}<\{3\}<\{4\},
\\
\{1,2\}<\{3\}<\{4\},\ \{1\}<\{2,3\}<\{4\},\ \{1\}<\{2\}<\{3,4\}.
\end{multline*}}
\begin{multline*}
-\frac{11}{5}a^3+\frac{1}{3}\frac{d}{b^2},\
-\frac{11}{5}a^3+\frac{1}{3}\frac{e}{b^2},\
-\frac{11}{5}b^3+\frac{1}{3}\frac{e}{c^2},\
\\
-\frac{11}{5}a^3b^3+\frac{1}{3}\frac{e}{c^2},\
-\frac{11}{5}a^3+\frac{1}{3}\frac{e}{b^2c^2},\ 
-\frac{11}{5}a^3+\frac{1}{3}\frac{d+e+de}{b^2}.
\end{multline*}
\end{example}


%
%
%
%

\begin{example}
Let $(\ell_1,k_1)=(0,1)$, $(\ell_2,k_2)=(1,1)$,
let $r\in\R\setminus\Q$ be an irrational number, 
and let $f:\N^2\to\Q$ be the function 
$$f(z_1,z_2)\ =\ \frac{\big\lfloor\{r z_1\} z_2\big\rfloor\cdot z_2}{17 z_1^3}$$
where $\lfloor\,x\,\rfloor:=\max\{s\in\Z\mid s\le x\}$ is the \emph{integer part},
and $\{x\}:=x-\lfloor\,x\,\rfloor$ is the \emph{fractional part}.
Observe that $f$ satisfies condition $(\ddagger)$ of Theorem \ref{main5}
with $\overline{n}_1=N_1=1$.
Indeed, let an arbitrary $n_1\in\N$ be given. Since $r$ is irrational,
$\{r n_1\}>0$ and so we can pick $\overline{n}_2$ such that
$\{r n_1\}\overline{n}_2\ge 1$.
By letting $N_2:=17 n_1^3$, the desired
condition $f(n_1N_1,n_2 N_2)\in\N$  is fulfilled for every $n_2\ge\overline{n}_2$.
Then we obtain the following infinite monochromatic pattern in
the natural numbers, where the sequences $(x_n)_{n=1}^\infty$ 
and $(y_n)_{n=1}^\infty$ are injective:

$$\left\{\frac
{\big\lfloor
\{r \sum_{s\in F_1}x_s\} \cdot
\sum_{\emptyset\ne G\subseteq F_2} \prod_{t\in G}y_t\big\rfloor\cdot
\sum_{\emptyset\ne G\subseteq F_2} \prod_{t\in G}y_t}
{17\cdot(\sum_{s\in F_1}x_s)^3}\
\Bigg | \ F_1<F_2\right\}.$$
In particular, if $a, b$ are the first two elements of the sequence $(x_n)_{n=1}^\infty$,
$b',c, d$ are the first three elements of the sequence $(y_n)_{n=1}^\infty$, and we only
consider those $F_1<F_2$ which are nonempty subsets of
$\{1,2,3\}$, then we obtain the following monochromatic pattern
in the natural numbers:\footnote
{~We can assume without loss of generality that $a,b,c,d$ are pairwise distinct.
The six elements of the pattern correspond to the following five choices
of $F_1<F_2$, respectively:
$$\{1\}<\{2\},\ \{1\}<\{3\},\ \{2\}<\{3\},\ \{1,2\}<\{3\},\ \{1\}<\{2,3\}.$$}

For instance, the following pattern is monochromatic
in the natural numbers, where $F_1=\{a<b\}<\{c<d\}=F_2$:
\begin{multline*}
\frac{\big\lfloor\{r a\}c\big\rfloor c}{17 a^3},\
\frac{\big\lfloor\{r a\}d\big\rfloor d}{17 a^3};\
\frac{\big\lfloor\{r b\}d\big\rfloor d}{17 b^3};\
\\
\frac{\big\lfloor\{r(a+b)\}d\big\rfloor d}{17 (a+b)^3};\
\frac{\big\lfloor\{ra\}(c+d+cd)\big\rfloor(c+d+cd)}{17 a^3}.
\end{multline*}
\end{example}

\medskip
\section{The associative operations $\lk$}\label{operationslk}

In order to prove the results presented in the previous sections,
we need to introduce a suitable class of associative operations on the integers.

For $\ell, k\in\Z$ with $\ell\ne 0$, let $T_{\ell,k}:\Z\to\Z$ be the affine transformation
$$T_{\ell,k}:a\longmapsto \ell a+k.$$ 
An elementary, but crucial observation is the following.

\begin{proposition}
Let $\ell,k\in\Z$ be integers with $\ell\ne 0$. Then the set 
$$S_{\ell,k}\ :=\ \text{range}(T_{\ell,k})\ =\ \{\ell a+k\mid a\in\Z\}$$
is closed under multiplication if and only if $\ell$ divides $k(k-1)$.
\end{proposition}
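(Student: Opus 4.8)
The plan is to characterize exactly when $S_{\ell,k} = \{\ell a + k \mid a \in \Z\}$ is closed under multiplication. This set is precisely the congruence class $\{m \in \Z \mid m \equiv k \pmod{\ell}\}$, so I will work modulo $\ell$. Closure under multiplication means: for all $a, b \in \Z$, the product $(\ell a + k)(\ell b + k)$ again lies in $S_{\ell,k}$, i.e., there exists $c \in \Z$ with $(\ell a + k)(\ell b + k) = \ell c + k$.

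**First I would** expand the product directly:
$$(\ell a + k)(\ell b + k) = \ell^2 ab + \ell k(a+b) + k^2.$$
The first two terms, $\ell^2 ab + \ell k(a+b)$, are manifestly divisible by $\ell$, so the entire product is congruent to $k^2$ modulo $\ell$. Thus the product lies in $S_{\ell,k}$ (the class of $k$ mod $\ell$) if and only if $k^2 \equiv k \pmod{\ell}$, that is, if and only if $\ell$ divides $k^2 - k = k(k-1)$.

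**To finish**, I would argue both directions cleanly. For the ``if'' direction, assuming $\ell \mid k(k-1)$, the computation above shows $(\ell a + k)(\ell b + k) - k = \ell^2 ab + \ell k(a+b) + (k^2 - k)$ is divisible by $\ell$, so dividing by $\ell$ produces the required integer $c$, witnessing closure. For the ``only if'' direction, I would simply test the specific element $k \in S_{\ell,k}$ (taking $a = 0$): closure forces $k \cdot k = k^2 \in S_{\ell,k}$, which means $k^2 \equiv k \pmod{\ell}$, i.e., $\ell \mid k(k-1)$. This single instance already suffices for necessity.

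**I expect no real obstacle here**; the statement is elementary and the congruence computation is the whole proof. The only point deserving care is phrasing the equivalence symmetrically: the condition $\ell \mid k(k-1)$ is exactly the statement that $k$ is idempotent modulo $\ell$, and this is precisely what makes the multiplicative structure descend through the affine transformation $T_{\ell,k}$ to yield the associative operation $\lk$ discussed in the surrounding section. I would note in passing that this is why the induced operation $a \lk b = \tfrac{1}{\ell}\big((\ell a + k)(\ell b + k) - k\big)$ takes integer values exactly under this hypothesis, tying the proposition directly to Definition \ref{def-symmetricpolynomial}.
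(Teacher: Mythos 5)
Your proof is correct and follows essentially the same route as the paper: identifying $S_{\ell,k}$ with the residue class of $k$ modulo $\ell$, expanding $(\ell a+k)(\ell b+k)$ to see the product is congruent to $k^2$ modulo $\ell$, and reducing closure to the idempotence condition $k^2\equiv k \pmod{\ell}$, i.e.\ $\ell \mid k(k-1)$. The only difference is cosmetic: you make the necessity direction explicit by testing $k\cdot k$, a detail the paper leaves implicit.
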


\begin{proof}
Just notice that $S_{\ell,k}=\{m\in\Z\mid m\equiv k\mod \ell\}$
is closed under multiplication if and only if $k^2\equiv k\mod \ell$.

Equivalently, given $a,b\in\Z$, there exists $c\in\Z$ such that
$(\ell a+k)(\ell b+k)=(\ell c+k)$ if and only if
$$c\ =\ \frac{1}{\ell}\left((\ell a+k)(\ell b+k)-k\right)\ =\ 
\ell ab + k(a+b) + \frac{k(k-1)}{\ell}\ \in\ \Z,$$
and this happens if and only if $\frac{k(k-1)}{\ell}\in\Z$.
\end{proof}

When $S_{\ell,k}$ is closed under multiplication,
the bijection $T_{\ell,k}:\Z\to S_{\ell,k}$ 
induces an operation $\lk$ on $\Z$ that makes $T_{\ell,k}$
an isomorphism of semigroups:
$$T_{\ell,k}:(\Z,\lk)\to(S_{\ell,k},\cdot).$$


\begin{definition}
For $\ell,k\in\Z$ where $\ell\ne 0$ divides $k(k-1)$, define:
$$a\,\lk b\,=\,c\ \Longleftrightarrow\ T_{\ell,k}(a)\cdot T_{\ell,k}(b)=T_{\ell,k}(c)\ 
\Longleftrightarrow\ (\ell a+k)(\ell b+k)\,=\,(\ell c+k).$$
\end{definition}

As seen above, the explicit formula is the following:
$$a\,\lk\,b\,=\,\ell ab+ k(a+b)+\frac{k(k-1)}{\ell}.$$
Notice that when $k=0$, one has:
$$a\,\textcircled{$\star$}_{\ell,0}\,b\,=\,\ell ab.$$

Clearly, for iterated $\lk$-products one has that
\begin{equation}\label{eq-ip}
a_1\,\lk \,\cdots\,\lk\,a_n\ =\ c\ \Longleftrightarrow\ 
(\ell a_1+k)\,\cdots\,(\ell a_n+k)\,=\,(\ell c+k).
\end{equation}

We now extend the definition of operations $\lk$ to 
the case where $\ell=0$ and $k=1$,
so as to also include the usual finite sums:
$$a\,\textcircled{$\star$}_{0,1}\,b\,=\,a+b.$$

The $(\ell,k)$-symmetric polynomials $\mathfrak{S}_{\ell,k}(X_1,\ldots,X_n)$
of Definitions \ref{def-symmetricpolynomial} and \ref{def-symmetricpolynomial2}
have been introduced because their values are precisely the iterated $\lk$-products.

\begin{proposition}\label{lk-finiteproducts}
Let $\ell,k\in\Z$ be such that 
either $\ell\ne 0$ divides $k(k-1)$ or $(\ell,k)=(0,1)$.
Then for all $a_1,\ldots,a_n\in\Z$:
$$a_1\,\lk \,\cdots\,\lk\,a_n\ =\ \mathfrak{S}_{\ell,k}(a_1,\ldots,a_n).$$
\end{proposition}

\begin{proof}
When $(\ell,k)=(0,1)$ the desired equality directly follows from the definitions.
Indeed, operation $\textcircled{$\star$}_{0,1}$
is the sum, and the $(0,1)$-symmetric polynomial 
$\mathfrak{S}_{0,1}(X_1,\ldots,X_n)=X_1+\ldots+X_n$.
Also when $k=0$ and $\ell\ne 0$, one directly uses the definitions, since 
$$a\,\textcircled{$\star$}_{\ell,0}\,b\,=\,\ell ab\ \Longrightarrow\ 
a_1\,\textcircled{$\star$}_{\ell,0}\,\cdots\,\textcircled{$\star$}_{\ell,0}\,a_n\,=\,\ell^n a_1\cdots a_n\ =\ 
\mathfrak{S}_{\ell,0}(a_1,\ldots,a_n).$$

Finally, let us now assume that $\ell,k\ne 0$ and $\ell$ divides $k(k-1)$. 
By equality $(\ref{eq-ip})$, if $c=a_1\,\lk \,\cdots\,\lk\,a_n$ then 
\begin{multline*}
c\ =\ \frac{1}{\ell}\left(\prod_{j=1}^n(\ell a_j+k)\right)-\frac{k}{\ell}\ =\ 
\frac{1}{\ell}\left(k^n+\sum_{\emptyset\ne G\subseteq\{1,\ldots,n\}}
\ell^{|G|} k^{n-|G|}\prod_{s\in G}a_s\right)-\frac{k}{\ell}\ =
\\
=\ \sum_{\emptyset\ne G\subseteq\{1,\ldots,n\}}
\left(\ell^{|G|-1}k^{n-|G|}\cdot\prod_{s\in G}a_s\right)\ +\ \frac{k^n-k}{\ell}\ =\ 
\\
=\ \sum_{j=1}^{n} \ell^{j-1} k^{n-j} e_{j}(a_1,\ldots,a_n) + \frac{k^n-k}{\ell}\ =\ 
\mathfrak{S}_{\ell,k}(a_1,\ldots,a_n).
\end{multline*}
\end{proof}

Before showing the fundamental 
properties that are satisfied by the operations $\lk$, we review the
basic notions about semigroups (see, e.g., the monograph \cite{ho}).

Recall that a \emph{semigroup} $(S,\star)$ is a set $S$ endowed
with an associative operation $\star$.
An element $z$ is a \emph{zero} if $z\star a=a\star z=z$ for every $a\in A$;
and an element $u$ is an \emph{identity} if $u\star a=a\star u=a$ for every $a\in S$.
If a zero element or an identity element exist, 
then they are necessarily unique.
An element $a$ is \emph{invertible} if it has an inverse $b$, that is
$a\star b=b\star a=u$.

For simplicity, in the following we will write $a^{(n)}$ 
to denote the $n$-th power of $a$ with respect to the operation $\star$:
$$a^{(n)}\ :=\ \underbrace{a\star\cdots\star a}_{n\ \text{times}}$$

An element $a$ has \emph{finite order} (or infinite order)
if the generated sub-semigroup 
$\{a^{(n)}\mid n\in\N\}$ is finite (or infinite, respectively);
equivalently, $a$ has finite order if $a^{(n)}=a^{(m)}$ for some $n\ne m$.

The semigroup $(S,\star)$ is \emph{left cancellable} if 
every element $a$ is left cancellable, that is for all $b,b'$,
one has that $a\star b=a\star b'\Rightarrow b=b'$. The notion of
\emph{right cancellable} is defined similarly.
A semigroup is \emph{cancellative} if it is both left and right cancellable.
Clearly, for commutative semigroups, the notions
of left cancellativity, right cancellativity, and cancellativity coincide.

\begin{proposition}\label{lk}
Let $\ell,k\in\Z$ be such that $\ell\ne 0$ divides $k(k-1)$. Then 
\begin{enumerate}
\item
$(\Z,\lk)$ is a commutative semigroup.
\item
$(\Z,\lk)$ contains the zero element $z$ if and only if
$\ell$ divides $k$; in this case, $z=-\frac{k}{\ell}$.
\item
$(\Z,\lk)$ contains the identity element $u$ 
if and only if $\ell$ divides $k-1$; in this case,
$u=-\frac{k-1}{\ell}$.
\item
$(\Z,\lk)$ contains an invertible element $u'\ne u$ if
and only $\ell$ divides $k+1$; in this case the only such element is
$u'=-\frac{k+1}{\ell}$ and $u'\lk u'=u$.
Therefore, $(\Z,\lk)$ is not a group.
\item
The only possible elements of $(\Z,\lk)$ that have finite order are the zero element 
$z=-\frac{k}{\ell}$ of order 1, the identity $u=-\frac{k-1}{\ell}$
of order 1, and $u'=-\frac{k+1}{\ell}$ of order 2, when they are integers.
\item
$\Z\setminus\{-\frac{k}{\ell}\}$ is a cancellative sub-semigroup.
\item
$\Z\setminus\{-\frac{k+1}{\ell},-\frac{k}{\ell}\}$ is a cancellative sub-semigroup.
\item
$\Z\setminus\{-\frac{k+1}{\ell},-\frac{k}{\ell},-\frac{k-1}{\ell}\}$
is a cancellative sub-semigroup.
\item
If $\ell>0$ then $\{a\in\Z\mid a>-\frac{k}{\ell}\}$ is a cancellative sub-semigroup.
\item
If $\ell>0$ and $N\in\N$ then 
$\{a\in\Z\mid a\ge -\frac{k}{\ell}+\frac{N}{\ell}\}$ is a cancellative sub-semigroup.
\end{enumerate}
\end{proposition}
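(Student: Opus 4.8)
The plan is to exploit the single structural fact established just before the proposition: the affine bijection $T_{\ell,k}\colon a\mapsto \ell a+k$ is a semigroup isomorphism from $(\Z,\lk)$ onto the multiplicative semigroup $(S_{\ell,k},\cdot)$, where $S_{\ell,k}=\{m\in\Z\mid m\equiv k \pmod{\ell}\}$. An isomorphism transports commutativity, zero and identity elements, invertibility, order, cancellativity, and sub-semigroups in both directions, so I would answer each of the ten questions inside the familiar multiplicative arithmetic of $\Z$ and then pull the answer back through $T_{\ell,k}^{-1}\colon m\mapsto\frac{m-k}{\ell}$, which returns an integer exactly when $m\in S_{\ell,k}$. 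Part (1) is then immediate, since $(\Z,\lk)$ is isomorphic to a sub-semigroup of the commutative $(\Z,\cdot)$.

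For (2)--(4) I would locate the distinguished multiplicative elements and compute their $T_{\ell,k}$-preimages. The zero of $(\Z,\cdot)$ is $0$, with $T_{\ell,k}^{-1}(0)=-\frac{k}{\ell}$, an integer iff $\ell\mid k$; the identity is $1$, with $T_{\ell,k}^{-1}(1)=-\frac{k-1}{\ell}$, an integer iff $\ell\mid(k-1)$. Since the only units of $\Z$ are $\pm1$ and $1$ already yields $u$, any further invertible element must be the preimage of $-1$, namely $u'=-\frac{k+1}{\ell}$, an integer iff $\ell\mid(k+1)$; and $u'\lk u'=u$ mirrors $(-1)^2=1$. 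Here I would observe that under the standing hypothesis $\ell\mid k(k-1)$ the congruence $k\equiv-1\pmod{\ell}$ forces $\ell\mid 2$ and hence $\ell\mid(k-1)$, so the identity is present whenever $u'$ exists and invertibility is well-posed. Finally, because $S_{\ell,k}$ is infinite it contains integers of absolute value $\ge 2$, none of which is a unit of $\Z$; thus $(\Z,\lk)$ has non-invertible elements and is not a group.

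Part (5) rests on the elementary fact that the only integers of finite multiplicative order are $-1,0,1$, of orders $2,1,1$; their preimages are exactly $u',z,u$ with the stated orders. For (6)--(10) I would realise each candidate set as the $T_{\ell,k}$-preimage of an explicit multiplicatively closed subset of $\Z$: removing $z$ corresponds to $S_{\ell,k}\setminus\{0\}$, removing $\{u',z\}$ to $S_{\ell,k}\setminus\{-1,0\}$, removing $\{u',z,u\}$ to $S_{\ell,k}\setminus\{-1,0,1\}$, while for $\ell>0$ the inequalities $a>-\frac{k}{\ell}$ and $a\ge\frac{N-k}{\ell}$ translate into $T_{\ell,k}(a)>0$ and $T_{\ell,k}(a)\ge N$, i.e.\ the positive elements of $S_{\ell,k}$ and those that are at least $N$. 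In every case the set is closed under multiplication and is contained in $\Z\setminus\{0\}$, where cancellativity is automatic because $\Z$ is an integral domain.

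I expect the argument to be routine once the isomorphism is invoked; the only spots demanding genuine care are the closure checks in (7) and (8), where one must note that a product of integers equals $\pm1$ only when both factors are units, so discarding $\pm1$ does not destroy closure, and the fact that a product of integers $\ge N\ge 1$ is again $\ge N$ in (10). The internal consistency of (4) (that the standing divisibility condition forces the identity to coexist with $u'$) is the single conceptual point, rather than a computation.
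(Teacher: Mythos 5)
Your proposal is correct and follows essentially the same route as the paper: everything is transported through the isomorphism $T_{\ell,k}:(\Z,\lk)\to(S_{\ell,k},\cdot)$, the distinguished elements $0,1,-1$ and the multiplicatively closed subsets of $\Z$ are identified, and their preimages give the stated elements and cancellative sub-semigroups. The only (immaterial) divergence is in showing that $\ell\mid k+1$ forces the identity to exist: you argue via $k\equiv-1\pmod{\ell}$ giving $\ell\mid 2$, while the paper uses the identity $k-1=(k+1)(k-1)-k(k-1)$; both are one-line arithmetic checks.
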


Notice that under the hypothesis that $\ell\ne 0$ divides $k(k-1)$,
if $\ell$ divides $k+1$ then $\ell$ also divides $k-1=(k+1)(k-1)-k(k-1)$, and
so $(\Z,\lk)$ contains the identity $u$.
The converse does not hold; \emph{e.g.} if $\ell=3$ and $k=4$
then $\ell$ divides $k-1$, and hence $k(k-1)$, but $\ell$ does not divide $k+1$;
in this case, the identity $u=-\frac{k-1}{\ell}=-1$ is the only invertible element in
$(\Z,\textcircled{$\star$}_{3,4})$
\begin{proof}
All properties directly follow from the fact that,
when $\ell\ne 0$ divides $k(k-1)$, the affine transformation
$T_{\ell,k}:(\Z,\lk)\to (S_{\ell,k},\,\cdot\,)$ is an isomorphism of semigroups.

(1). The associativity and commutativity properties of multiplication on $S_{\ell,k}$
are inherited by the operation $\lk$, via the isomorphism $T_{\ell,k}$.

%

(2). $z$ is the zero element of $(\Z,\lk)$ if and only if $T_{\ell,k}(z)=\ell z+k=0$
is the zero element of $S_{\ell,k}$ if and only if $z=-\frac{k}{\ell}\in\Z$.


(3). $u$ is the identity of $(\Z,\lk)$ if and only if $T_{\ell,k}(u)=\ell u+k=1$
is the identity of $S_{\ell,k}$ if and only if $u=-\frac{k-1}{\ell}\in\Z$.


(4). A product $a\,\lk b=u$ if and only if $(\ell a+k)(\ell b+k)=1$
if and only if either $(\ell a+k)=(\ell b+k)=1$ or
$(\ell a+k)=(\ell b+k)=-1$.
In the former case $a=b=u=-\frac{k-1}{\ell}$, and in the latter case 
$a=b=u'=-\frac{k+1}{\ell}$ and so $u'\,\lk u'=u$.


(5). An element $a\ne z=-\frac{\ell}{k}$ has finite order if and only if
$\ell a+k\ne 0$ has finite order in $(S_{\ell,k},\,\cdot\,)$.
But then it must be either $\ell a+k=1$ and hence $a=u$,
or $\ell a+k =-1$ and hence $a=u'=-\frac{k+1}{\ell}$.

(6). If $a,b\ne-\frac{k}{\ell}$ 
and $c=a\,\lk b$ then $(\ell c+k)=(\ell a+k)(\ell b+k)\ne 0$ 
and so $c\ne-\frac{k}{\ell}$. This proves that $\Z\setminus\{-\frac{k}{\ell}\}$ is a subsemigroup.
Let us now show that every $b\ne-\frac{k}{\ell}$ is cancellative.
By definition, $a\,\lk b=a'\,\lk b$ if and only if $(\ell a+k)(\ell b+k)=(\ell a'+k)(\ell b+k)$.
Since $b\ne-\frac{k}{\ell}$, we can conclude that
$\ell a+k=\ell a'+k$, and hence $a=a'$.

(7) and (8). Notice that $\Z\setminus\{-1,0\}$ and $\Z\setminus\{-1,0,1\}$
are multiplicative sub-semigroups of $\Z$, and hence
$S_{\ell,k}\setminus\{-1,0\}$ and $S_{\ell,k}\setminus\{-1,0,1\}$
are sub-semigroups of $(S_{\ell,k},\,\cdot\,)$.
Since $T_{\ell,k}(u')=-1$, $T_{\ell,k}(z)=0$, and $T_{\ell,k}(u)=1$,
it follows that $\Z\setminus\{u',z\}$ and $\Z\setminus\{u',z,u\}$
are sub-semigroups of $(\Z,\lk)$. 
%
%

(9). Since $\ell>0$, one has that $x>-\frac{k}{\ell}\Leftrightarrow (\ell x+k)>0$.
If $a,b>-\frac{k}{\ell}$ and $c=a\,\lk b$ then $(\ell c+k)=(\ell a+k)(\ell b+k)>0$ 
and so also $c>-\frac{k}{\ell}$. 

(10). Similarly as in the previous point,
since $\ell>0$ one has that $x\ge-\frac{k}{\ell}+\frac{N}{\ell}\Leftrightarrow (\ell x+k)\ge N$.
If $a,b\ge-\frac{k}{\ell}+\frac{N}{\ell}$ and $c=a\,\lk b$ then $(\ell c+k)=(\ell a+k)(\ell b+k)\ge N^2\ge N$,
and hence also $c\ge-\frac{k}{\ell}+\frac{N}{\ell}$. 

Finally, notice that the semigroups considered in (7), (8), (9), and (10) are
cancellative as sub-semigroups of $\Z\setminus\{-\frac{k}{\ell}\}$.
\end{proof}

%
%
%

\medskip
\section{The proofs}

In this section we briefly review several general properties of 
algebra in the Stone-\v{C}ech compactification of semigroups, 
and then apply them to the semigroups determined by associative operations $\lk$.
The reader is referred to the fundamental 
book \cite{hs} for a comprehensive presentation of all notions
and results recalled here.

\subsection{Algebra in the Stone-\v{C}ech compactification}

The primary observation on which the theory is grounded is the
fact that any associative operation $\star$ on a discrete set $S$
can be extended to an associative operation on its
\emph{Stone-\v{C}ech compactification}
$\beta S=\{\U\mid \U\ \text{ultrafilter on}\ S\}$.
Recall that a base of open and closed sets of the topology on $\beta S$ 
is given by the family $\{\mathcal{O}_A\mid A\subseteq S\}$
where $\mathcal{O}_A=\{\U\in\beta S\mid A\in\U\}$.
It is assumed that $S\subseteq\beta S$ by identifying each element $a\in S$ 
with the principal ultrafilter $\U_a=\{A\subseteq S\mid a\in A\}$.

For all $\U,\V\in\beta S$, the ultrafilter $\U\star\V$
is defined by letting for every $A\subseteq S$:
$$A\in \U\star\V\ \Longleftrightarrow\ 
\left\{a\in S\mid a^{-1}A\in\V\right\}\in\U$$
where $a^{-1}A:=\{b\in S\mid a\star b\in S\}$.
Notice that the above is an actual extension of the operation on $S$,
since $\U_a\star\U_b=\U_{a\star b}$.  
The resulting semigroup $(\beta S,\star)$ has the structure of a 
\emph{compact right topological semigroup} (see \cite[\S 4.1]{hs}),
that is, for every $\V\in\beta S$ the ``product on the right" 
$\rho_\V:\U\mapsto \U\star\V$ is a continuous function. 

The most considered examples are $(\beta\N,\oplus)$ and $(\beta\N,\odot)$,
namely the semigroups obtained on the Stone-\v{C}ech compactification
of the natural numbers from the additive semigroup $(\N,+)$ and the multiplicative semigroup
$(\N,\cdot)$, respectively. In fact, the study of those ultrafilter semigroups
have produced a remarkable amount of results in arithmetic Ramsey Theory, 
as evidenced by the extensive monograph \cite{hs}.
It is worth noticing that in virtually all significant examples, including
$S=(\N,+)$ and $S=(\N,\cdot)$, the 
ultrafilter semigroup $(\beta S,\star)$ is not commutative
(see \cite[\S 4.2]{hs}).

A fundamental tool in this area of research is provided by 

\begin{itemize}
\item
Ellis' Lemma \cite{el}:
\emph{In every compact right topological semigroup there exist
idempotent elements $x\star x=x$.}\footnote
{~This is Theorem 2.5 of \cite{hs}.}
\end{itemize}
 
In consequence, for every semigroup $(S,\star)$ there exist
idempotent ultrafilters $\U=\U\star\U$ in $(\beta S,\star)$.

%
%


For any sequence $(a_n)_{n=1}^\infty$ of elements in a semigroup $(S,\star)$, 
denote by $\FP(a_n)_{n=1}^\infty$ the corresponding set of \emph{finite products}:
$$\FP(a_n)_{n=1}^\infty\ :=\ \left\{a_{n_1}\star\,\cdots\,\star a_{n_s}\mid n_1<\ldots<n_s\right\}.$$ 

The relevance of idempotent ultrafilters in Ramsey Theory
is based on the following crucial fact:

\begin{itemize}
\item
Galvin's Theorem:\
\emph{Let $(S,\star)$ be a semigroup, and let
$\U=\U\star\U$ be an idempotent ultrafilter in the Stone-\v{Cech} compactification
$(\beta S,\star)$. Then for every $A\in\U$ 
there exists a sequence $(a_n)_{n=1}^\infty$
such that the set of finite products $\FP(a_n)_{n=1}^\infty\subseteq A$.
}
\end{itemize}


To our purposes, we need the following general property
about \emph{non-principal} ultrafilters and \emph{injective} sequences.

\begin{theorem}\label{nonprincipalidempotents}
Let $(S,\star)$ be a semigroup, and assume that
there is a left cancellable subsemigroup $(S',\star)$ where
$S\setminus S'$ is finite. 
Then a set $A\subseteq S$
includes a set of finite products $\FP(a_n)_{n=1}^\infty\subseteq A$
for some injective sequence $(a_n)_{n=1}^\infty$ if and only if $A\in\U$
for some non-principal idempotent ultrafilter $\U=\U\star\U$ on $S$.
\end{theorem}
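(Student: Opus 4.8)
The plan is to establish the biconditional by treating the two implications separately; the near-cancellativity hypothesis will be needed only to force non-principality in the ``only if'' direction, while the ``if'' direction is essentially Galvin's Theorem with an injectivity refinement.

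For the ``if'' direction, suppose $A\in\U$ for some non-principal idempotent $\U=\U\star\U$. Galvin's Theorem already yields a sequence $(a_n)_{n=1}^\infty$ with $\FP(a_n)_{n=1}^\infty\subseteq A$, so the only additional work is to arrange injectivity. I would run the standard inductive construction (as in \cite{hs}), in which at stage $n$ one chooses $a_n$ from a set $A_n\in\U$ whose membership already guarantees that every finite product formed from $a_1,\ldots,a_n$ lands in $A$. Since $\U$ is non-principal it contains no finite set, so $A_n\setminus\{a_1,\ldots,a_{n-1}\}\in\U$ is still non-empty; picking $a_n$ there preserves all the finite-product requirements while making the sequence injective.

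For the ``only if'' direction, let $(a_n)_{n=1}^\infty$ be injective with $\FP(a_n)_{n=1}^\infty\subseteq A$, and set $B_m:=\{a_{n_1}\star\cdots\star a_{n_s}\mid m\le n_1<\ldots<n_s\}$. The sets $\mathcal{O}_{B_m}$ are clopen and decreasing, so $E:=\bigcap_{m}\mathcal{O}_{B_m}$ is a non-empty closed subset of $\beta S$ (the $B_m$ have the finite intersection property). I would first verify that $E$ is a subsemigroup: given $\U,\V\in E$ and $s\in B_m$, writing $s=a_{n_1}\star\cdots\star a_{n_j}$ with $n_j=N$ gives $B_{N+1}\subseteq s^{-1}B_m$, because concatenating $s$ with any product of strictly larger indices again lies in $B_m$; as $B_{N+1}\in\V$, this shows $B_m\subseteq\{s\mid s^{-1}B_m\in\V\}$, and since $B_m\in\U$ we get $B_m\in\U\star\V$. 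Hence $\U\star\V\in E$, and $E$ is a compact right topological semigroup to which Ellis' Lemma applies.

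The crux is to locate the idempotent among the non-principal ultrafilters, and this is exactly where the hypothesis is used: I claim $\beta S\setminus S$ is a subsemigroup. If $\U,\V$ are non-principal and $c\in S$, then for $s\in S'$ the set $s^{-1}\{c\}=\{t\mid s\star t=c\}$ meets $S'$ in at most one point (by left cancellation in $S'$) and is otherwise contained in the finite set $S\setminus S'$, so it is finite; since $\V$ is non-principal, $s^{-1}\{c\}\notin\V$, whence $\{s\mid s^{-1}\{c\}\in\V\}\subseteq S\setminus S'$ is finite and thus not in $\U$. Therefore $\{c\}\notin\U\star\V$ for every $c$, i.e.\ $\U\star\V$ is non-principal. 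Because each $B_m$ is infinite (the sequence being injective), the filter they generate extends to a non-principal ultrafilter, so $E\cap(\beta S\setminus S)$ is a non-empty closed subsemigroup; by Ellis' Lemma it contains an idempotent $\U$, which is non-principal and satisfies $B_1=\FP(a_n)_{n=1}^\infty\in\U$, hence $A\in\U$. I expect the main obstacle to be precisely this final step: without the near-cancellativity hypothesis one can only guarantee an idempotent in $E$, which might be principal, and the finiteness of $s^{-1}\{c\}$ for $s\in S'$ is exactly what excludes the principal idempotents.
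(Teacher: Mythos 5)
Your proof is correct and follows essentially the same route as the paper: both show that $\beta S\setminus S$ is a subsemigroup using left cancellativity on the cofinite $S'$, intersect it with the closed subsemigroup $\bigcap_{m}\mathcal{O}_{\FP(a_n)_{n=m}^\infty}$ (nonempty by injectivity), apply Ellis' Lemma to extract a non-principal idempotent, and obtain the converse by making the Galvin/Glazer construction injective via non-principality. The only cosmetic difference is that you verify the subsemigroup property of that intersection directly, where the paper simply cites \cite[Lemma 5.11]{hs}.
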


\begin{proof}
This is just a variant of \cite[Thm. 5.12]{hs}, where one considers
\emph{non-principal} ultrafilters and \emph{injective} sequences.
However, for completeness, we outline the proof here.

Notice first that our hypothesis on $S$ guarantees that
the non-principal ultrafilters $\beta S\setminus S$ form a sub-semigroup. 
Indeed let $\U,\V\in\beta S\setminus S$ and let $c\in S$.
By left cancellativity, for every $a\in S'$ the set $\{b\in S'\mid a\star b=c\}$
contains at most one element, and so $\{b\in S\mid a\star b=c\}\notin\V$, 
because it is finite. Then $\{a\in S\mid\{b\in S\mid a\star b=c\}\notin\V\}\in\U$, since
it includes $S'$, which is a cofinite subset of $S$. This means that
$\{c\}\notin\U\star\V$. As $c\in S$ was arbitrary, 
we can conclude that $\U\star\V$ is non-principal.
Now recall that given any sequence $(x_n)_{n=1}^\infty$, the intersection
$X:=\bigcap\{\mathcal{O}_{\FP(x_n)_{n=s}^\infty}\mid s\in\N\}$
is a closed sub-semigroup of $(\beta S,\star)$ (see \cite[Lemma 5.11]{hs}).
Then also $(X\cap(\beta S\setminus S),\star)$ is a closed sub-semigroup
of $(\beta S,\star)$. Notice that, since $(x_n)_{n=1}^\infty$ is injective,
the family of sets $\{\FP(x_n)_{n=s}^\infty\mid s\in\N\}\cup\{\N\setminus\{s\}\mid s\in\N\}$
has the finite intersection property and so, by compactness of $\beta S$,
its intersection is nonempty:
$$(\beta S\setminus S)\cap X\ =\ 
\bigcap_{s\in\N}\mathcal{O}_{\FP(x_n)_{n=s}^\infty}\,\cap\,
\bigcap_{s\in\N}\mathcal{O}_{\N\setminus\{s\}}\ \ne\ \emptyset.$$
Then, by Ellis' Lemma, there exist idempotent elements $\U\in(\beta S\setminus S)\cap X$.
Such non-principal ultrafilters $\U$ contains $\FP(x_n)_{n=1}^\infty\in\U$,
and hence $A\in\U$.

For the other direction, it is readily seen that
the sequence $(x_n)_{n=1}^\infty$ as constructed in Glazer's Theorem
(see \cite[Thm. 5.8]{hs}) 
can be made injective by assuming that the idempotent ultrafilter $\U$ is non-principal.
\end{proof}

%
%
%
%
%

\subsection{Proof of Theorem \ref{main1}}

This is similar to the ultrafilter proof of Hindman Theorem
where the associative operation $\lk$ is considered
instead of the sum operation, the only difference being
that one has to consider suitable subsets of the integers
so as to have cancellative sub-semigroups of $(\Z,\lk)$.
 
Let $(x_n)_{n=1}^\infty$ be an injective sequence of integers, and let
$\mathfrak{S}_{\ell,k}(x_n)_{n=1}^\infty=C_1\cup\ldots\cup C_r$ be a finite coloring
of the corresponding $(\ell,k)$-symmetric system.
By the hypothesis that $\ell\ne 0$ divides $k(k-1)$, we have that
$(\Z,\lk)$ is a semigroup; besides, by Proposition \ref{lk-finiteproducts}, the set
of finite $\lk$-products $\FP(x_n)_{n=1}^\infty$ coincides with the $(\ell,k)$-symmetric system
$\mathfrak{S}_{\ell,k}(x_n)_{n=1}^\infty$.
Since $(\Z,\lk)$ has $\Z\setminus\{-\frac{\ell}{k}\}$ as a cancellative sub-semigroup,
by Theorem \ref{nonprincipalidempotents} we can pick a non-principal
idempotent ultrafilter $\U=\U\lk\U$ on $\Z$ such that $\mathfrak{S}_{\ell,k}(x_n)_{n=1}^\infty\in\U$.
Then one color of the partition $C_i\in\U$ and so, again by Theorem \ref{nonprincipalidempotents},
$\mathfrak{S}_{\ell,k}(y_n)_{n=1}^\infty\subseteq C_i$ for a suitable injective
sequence $(y_n)_{n=1}^\infty$.

\smallskip
Now let us assume that $\ell\in\N$ is positive, and let
$\N=C_1\cup\ldots\cup C_r$ be a finite coloring.
Pick a natural number $N\in\N$ with $N>-\frac{k}{\ell}$.
By Proposition \ref{lk}, if we consider 
the subset $\N':=\{a\in\Z\mid a>-\frac{k}{\ell}+\frac{N}{\ell}\}\subseteq\N$
then $(\N',\lk)$ is a cancellative semigroup, and so
we can pick a non-principal idempotent ultrafilter $\U=\U\lk\U$ on $\N'$.
By the property of ultrafilter, 
there exists $i$ such that $C_i\cap\N'\in\U$, and so, by
Theorem \ref{nonprincipalidempotents}, there exists 
an injective sequence $(x_n)_{n=1}^\infty$ with
$\mathfrak{S}_{\ell,k}(x_n)_{n=1}^\infty\subseteq C_i\cap\N'\subseteq C_i$, as desired.

\subsection{Proof of Theorem \ref{main2}}

We will use a generalization of
Deuber Theorem for commutative semirings that has been recently proved 
by V. Bergelson, J.H. Johnson, and J. Moreira.
In particular, we will use the following result
(see \cite[Corollary 3.7]{bjm}):

\begin{theorem}[Bergelson-Johnson-Moreira]\label{bjm}
Let $(S,\star)$ be a commutative semigroup, and for $j=1,\ldots,m$
let $\mathfrak{F}_j$ be a finite set of endomorphisms $f:S^j\to S$.\footnote
{~The Cartesian product $S^j$ has the natural structure of semigroup
inherited from $(S,\star)$. Precisely, the associative operation 
$\star^j$ on $S^j$ is defined  coordinatewise:
$$(s_1,\ldots,s_j)\star^j(s'_1,\ldots,s'_j)\ :=\ (s_1\star s'_1,\ldots,s_j\star s'_j).$$}
Then for every finite coloring $S=C_1\cup\ldots\cup C_r$
there exist a color $C_i$ and elements $a_0,a_1,\ldots,a_m\ne u$ different
from the identity, such that $a_0\in C_i$ and $f(a_0,\ldots,a_{j-1})\star a_j\in C_i$
for every $j=1,\ldots,m$ and for every $f\in\mathfrak{F}_j$.
\end{theorem}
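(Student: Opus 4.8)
The plan is to prove this Deuber-type statement by \emph{algebra in the Stone-\v{C}ech compactification} of $(S,\star)$, using minimal idempotent ultrafilters. First I would pass to $(\beta S,\star)$, which is a compact right topological semigroup; since $(S,\star)$ is commutative, its smallest two-sided ideal $K(\beta S)$ is nonempty and contains minimal idempotents. I fix a minimal idempotent $p=p\star p\in K(\beta S)$ and let $C_i$ be the unique color with $C_i\in p$; such a color is a \emph{central} set. In the intended setting $S$ is infinite, so $p$ may be taken non-principal, whence $\{u\}\notin p$ and every element extracted from a member of $p$ is automatically different from the identity $u$. This disposes of the constraint $a_j\ne u$ at no cost, and it remains to produce the staircase $a_0,\ldots,a_m$ inside $C_i$.

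Next I would set up the recursion. Using the identity $u$ and commutativity, each homomorphism $f\colon S^j\to S$ decomposes through coordinate endomorphisms $\phi\colon S\to S$ as $f(s_1,\ldots,s_j)=\phi_1(s_1)\star\cdots\star\phi_j(s_j)$, so the whole problem is governed by the finite family of endomorphisms $\bigcup_j\mathfrak{F}_j$. Recall that for the idempotent $p$ and $A\in p$ the set $A^{\star}=\{s\in S\mid s^{-1}A\in p\}$ again lies in $p$, and $s^{-1}A\in p$ for every $s\in A^{\star}$. Having chosen $a_0,\ldots,a_{j-1}$, the required conditions $f(a_0,\ldots,a_{j-1})\star a_j\in C_i$ amount to $a_j\in\bigcap_{f\in\mathfrak{F}_j}f(a_0,\ldots,a_{j-1})^{-1}C_i$, and this intersection lies in $p$ exactly when each value $f(a_0,\ldots,a_{j-1})\in C_i^{\star}$. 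Thus the recursion is driven by the invariant ``all partial endomorphism values lie in $C_i^{\star}$,'' and the task reduces to producing choices that maintain it down to stage $m$.

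The step I expect to be the main obstacle is precisely maintaining that invariant, and I want to flag why a single fixed idempotent does \emph{not} suffice. Keeping $f(a_0,\ldots,a_{j-1})\in C_i^{\star}$ forces constraints of the form $a_i\in\phi^{-1}(C_i^{\star})$, and $\phi^{-1}(C_i^{\star})\in p$ holds iff $C_i^{\star}\in\tilde\phi(p)$, where $\tilde\phi(p)=\{B\subseteq S\mid\phi^{-1}(B)\in p\}$ is the pushforward; since in general $\tilde\phi(p)\ne p$, the naive recursion within one idempotent breaks down. The correct resolution is a Central-Sets-type argument: instead of a single $p$ one exploits the full algebraic structure of the minimal ideal $K(\beta S)$ and its idempotents simultaneously (for instance via the $J$-set/$C$-set characterization of central sets, or via a minimal idempotent in a large product semigroup $\beta(S^n)$ whose coordinate projections are compatibly intertwined by the extended endomorphisms). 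This is exactly the content needed to guarantee that a central set contains a full Deuber staircase for the given finite family of endomorphisms, and it is where minimality of $p$ (rather than mere idempotency) is essential, since endomorphisms preserve centrality but need not preserve bare $\mathrm{IP}$-structure.

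Finally, once the Central-Sets engine delivers $a_0,\ldots,a_m$ with $a_0\in C_i$ and all partial values in $C_i^{\star}$, the relations $f(a_0,\ldots,a_{j-1})\star a_j\in C_i$ follow immediately from the defining property of $C_i^{\star}$, and the non-principality of $p$ ensures the elements differ from $u$; this completes the argument. In effect the statement is the specialization, to a single commutative operation equipped with a finite family of endomorphisms, of the general Central Sets Theorem, which one would either invoke directly or re-derive by the standard minimal-ideal construction outlined above.
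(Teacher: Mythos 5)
Before assessing the mathematics, note a structural point: the paper contains \emph{no proof} of this statement. It is imported verbatim from Bergelson--Johnson--Moreira \cite[Corollary 3.7]{bjm} and used as a black box in the proofs of Theorems \ref{main2} and \ref{main3}. So your task was effectively to reconstruct the BJM argument, and the proposal cannot be compared to an in-paper proof --- it must stand on its own.

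It does not, because its third paragraph is a placeholder rather than an argument. Your first two paragraphs are fine: fixing a minimal idempotent $p=p\star p\in K(\beta S)$, decomposing each $f\in\mathfrak{F}_j$ through coordinate endomorphisms $\phi_i(s)=f(u,\ldots,s,\ldots,u)$ (this requires the identity $u$, i.e.\ that $S$ is a monoid, which the statement implicitly assumes; and your non-principality claim needs $S$ infinite and, say, cancellative, so that $\beta S\setminus S$ is a two-sided ideal containing $K(\beta S)$ --- minor caveats), and reducing the recursion to the invariant that all partial values $f(a_0,\ldots,a_{j-1})$ lie in $C_i^{\star}$. You then correctly identify the obstruction: sustaining the invariant forces conditions of the form $a_j\in\phi^{-1}\bigl(g(a_0,\ldots,a_{j-1})^{-1}C_i^{\star}\bigr)$, and membership of such sets in $p$ would require $C_i^{\star}$-type sets to belong to the pushforward $\tilde\phi(p)$, which in general differs from $p$. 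But having isolated exactly the hard step, you dispose of it by invoking ``a Central-Sets-type argument'' or ``the general Central Sets Theorem,'' and the final claim that the statement ``is the specialization'' of the Central Sets Theorem is not correct: that theorem produces monochromatic configurations $a\star\prod_{t\in H}y_t$ built from \emph{prespecified} sequences $(y_{\ell,t})_t$, whereas here the homomorphisms are evaluated at the \emph{previously chosen} elements $a_0,\ldots,a_{j-1}$ themselves --- the self-referential, Deuber-type feature that centrality alone does not handle, precisely because preimages under endomorphisms need not preserve centrality (your own pushforward objection). Bridging this gap --- classically via the first-entries/image-partition-regularity machinery, and in the abstract semigroup-with-endomorphisms setting via the actual BJM construction of compatible minimal idempotents in closed subsemigroups of $\beta(S^n)$ intertwined by the extended homomorphisms --- is the entire content of the theorem. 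Naming candidate tools ($J$-sets, $C$-sets, product semigroups) without carrying out that intertwining argument leaves the proof with a genuine hole at its center.
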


Notice that when $c=1$ the above property actually generalizes Deuber Theorem. 
Indeed, given $m$ and $p$, let $(S,\star)=(\N,+)$, and for $j=1,\ldots,m$ let 
$$\mathfrak{F}_j\ =\ \left\{f_{(n_0,\ldots,n_{j-1})}\,\Big|\, n_0,\ldots,n_{j-1}\in\{-p,\ldots,p\}\right\}$$
where $f_{(n_0,\ldots,n_{j-1})}:\N^j\to\N$ is the homomorphism
of semigroups given by the linear combinations
$f_{(n_0,\ldots,n_{j-1})}(a_0,\ldots,a_{j-1})=\sum_{s=0}^{j-1}n_s a_s$.
Then, for every finite coloring of $\N$,  one obtains the existence of elements 
$a_0,\ldots,a_m\in\N$
such that $a_0$ and $a_j+\sum_{s=0}^{j-1}n_s a_s$ are monochromatic
for every $j=1,\ldots,m$ and for all $n_0,\ldots,n_{j-1}\in\{-p,\ldots,p\}$.

%

\smallskip
Let $\Z':=\Z\setminus\{-\frac{k+1}{\ell},-\frac{k}{\ell}\}$.
By the properties in Proposition \ref{lk}, 
$(\Z',\lk)$
is a cancellative commutative semigroup without zero element
and, since $\ell$ divides $k-1$, with identity $u=-\frac{k-1}{\ell}\in\Z'$.

Recall that for $a\in\Z$ and $n\in\N$, 
we denoted by $a^{(n)}$ (not to be confused with $a^n$)
the $n$-th power of $a$ with respect to the operation $\lk$:
$$a^{(n)}\ :=\ \underbrace{a\,\lk\,\cdots\,\lk a}_{n\ \text{times}}$$
Extend the above notation to $n=0$ by setting $a^{(0)}=u$, the identity element.
For every $j$-tuple $\nu=(\nu_0,\ldots,\nu_{j-1})\in(\N\cup\{0\})^j$ of non-negative integers,
let $\varphi_\nu$ be the function where
$$\varphi_\nu:(a_0,\ldots,a_{j-1})\,\longmapsto\, 
a_0^{(\nu_0)}\,\lk\,\cdots\,\lk\, a_{j-1}^{(\nu_{j-1})}.$$
Since $\Z'$ is commutative, $\varphi_\nu:(\Z')^j\to\Z'$ is a semigroup homomorphism.
Now apply the above Theorem \ref{bjm} with 
$(S,\star)=(\Z',\lk)$, and with the following sets of homomorphisms for $j=1,\ldots,m$:
$$\mathfrak{F}_j\ =\ \left\{\varphi_\nu:(\Z')^j\to\Z'\,\big|\,
\nu=(\nu_0,\ldots,\nu_{j-1})\in\{0,1,\ldots,L\}^j\right\}$$

Then for every finite coloring $\Z=C_1\cup\ldots\cup C_r$ there
exist a color $C_i$ and elements $a_0,a_1,\ldots,a_m\ne u$ different
from the identity such that:
\begin{itemize}
\item
$a_0\in C_i\cap\Z'$;
\item
$a_0^{(\nu_0)}\lk\, \cdots\,\lk\, a_{j-1}^{(\nu_{j-1})}\lk\, a_j\in C_i\cap\Z'$ for every $j=1,\ldots,m$
and for all $\nu_0,\ldots,\nu_{j-1}\in\{0,1,\ldots,L\}$.
\end{itemize}

%

Finally, notice that for every $j=1,\ldots,m$ and for all non-negative
integers $\nu_0,\ldots,\nu_{j-1}$, one has
\begin{multline*}
c\ =\ a_0^{(\nu_0)}\lk\, \cdots\,\lk\, a_{j-1}^{(\nu_{j-1})}\lk\, a_j\ \Longleftrightarrow\
\\
(\ell c+k)=(\ell a_0+k)^{\nu_0}\cdots(\ell a_{j-1}+k)^{\nu_{j-1}}(\ell a_j+k)\ \Longleftrightarrow
\\
c\ = \frac{1}{\ell}\left((\ell a_j+k)\cdot\prod_{s=0}^{j-1}(\ell a_s+k)^{\nu_s}-k\right).
\end{multline*}
For every $j=0,1,\ldots,m$, since $a_j\ne u=-\frac{k-1}{\ell}$ and since 
$a_j\notin\{-\frac{k+1}{\ell},-\frac{k}{\ell}\}$,
we have that $\ell a_j+k\ne 0,1,-1$, as desired.

\smallskip
Now assume that $\ell\in\N$ is positive, and let
$\N=C_1\cup\ldots\cup C_r$ be a finite coloring.
By Proposition \ref{lk}, $(\N',\lk)$ where
$\N':=\{a\in\Z\mid a>-\frac{k}{\ell}\}$ is 
a cancellative commutative semigroup without zero element
and with identity $u=-\frac{k-1}{\ell}$.
If $k<0$, then $\N'\subseteq\N$, and we can proceed axactly as above.
If $k>0$ then consider the finite coloring $\N'=C_1\cup\ldots\cup C_r\cup F$,
where $F=\{0,-1,\ldots,-\lfloor\frac{k}{\ell}\rfloor\}$ is finite.
Notice that for large enough $m$ and $L$, the monochromatic
configuration cannot be included in the finite set $F$, since
elements $\ell a_j+k\ne 0,1,-1$.
Then proceeding as done above one finds a monochromatic
configuration in one of the $C_i$.

%

\subsection{Proof of Theorem \ref{main3}}

Let $\Z':=\Z\setminus\{-\frac{k+1}{\ell},-\frac{k}{\ell}\}$.
By Proposition \ref{lk},
$(\Z',\lk)$ is a commutative cancellative semigroup with identity $u=-\frac{k-1}{\ell}$,
where $u$ is the only invertible element,
and where all elements except $u$ have infinite order.
For $j=0,1,\ldots,L+1$ let $f_j:\Z'\to\Z'$ be the endomorphism
where $f_j(b)=b^{(j)}$.
By Theorem \ref{bjm}, for every finite coloring $\Z=C_1\cup\ldots\cup C_r$ there
exist a color $C_i$ and elements $b,a'\ne u$ different
from the identity such that such that
$$b\,,\ a'\,,\ a'\,\lk b\,,\ 
a'\,\lk b\,\lk b\,,\ 
\ldots\ ,\ a'\,\lk b^{(L+1)}\ \in\ C_i\cap\Z'.$$ 
If we let $a:=a'\,\lk b$, we have the following monochromatic pattern
$$b\,,\ a\,,\ a\,\lk b\,,\ 
\ldots\ ,\ a\,\lk b^{(L)}\ \in\ C_i\cap\Z'$$ 
where elements are pairwise distinct. 
Indeed, by cancellativity, $a'\ne u$ implies that $a:=a'\,\lk b\ne u\,\lk b=b$.
If it was $a=a\,\lk b^{(s)}$ for some $s\ge 1$ then, by cancellativity,
we would have $b^{(s)}=u$, a contradiction because $b\ne u$ has infinite order;
and if it was $b=a\,\lk b^{(s)}$ for some $s\ge 1$ then, 
again by cancellativity, we would have $a\,\lk b^{(s-1)}=u$, and hence 
$a=b^{(s-1)}=u$, a contradiction.
Finally, notice that for every $j\in\N$:
\begin{multline*}
c\ =\ a\,\lk b^{(j)}\ \Longleftrightarrow\
(\ell c+k)=(\ell a+k)(\ell b+k)^j
\\
\Longleftrightarrow\ 
c\ = \frac{1}{\ell}\big((\ell a+k)(\ell b+k)^j-k\big).
\end{multline*}

%
%
%
%
%
%
%
%
%
%

Now assume that $\ell\in\N$. 
Similarly as done in the proof
of the previous Theorem \ref{main2}, consider 
the cancellative semigroup $\N':=\{a\in\Z\mid a>-\frac{k}{\ell}\}$
with identity $u=-\frac{k-1}{\ell}$, and where all elements except $u$ have infinite order.
If $k<0$ then $\N'\subseteq\N$ and we can proceed as in the fisrt part of this proof.
If $k>0$, consider the finite coloring $\N=C_1\cup\ldots\cup C_r\cup C_{r+1}$
where $C_{r+1}:=\{0,-1,\ldots,-\lfloor\frac{k}{\ell}\rfloor\}$.
Without loss of generality we can assume that $L>|C_{r+1}|$. 
By using Theorem \ref{bjm},
as already done in the first part of the proof, we see that
there exist a color $C_i$ and elements $b,a\ne u$ such that
$$b\,,\ a\,,\ a\,\lk b\,,\ 
a'\,\lk b\,\lk b\,,\ 
\ldots\ ,\ a,\lk b^{(L')}\ \in\ C_i$$ 
where the above are pairwise different.
Clearly $i\ne r+1$ because we assumed $|C_{r+1}|> L$, and so
we obtain the desired result.

\subsection{Generalizations of Milliken-Taylor Theorem}

Several different generalizations of Milliken-Taylor Theorem to
arbitrary semigroups have been demonstrated in recent years
(see \cite[\S 3]{bhw} and \cite[Thm. 6.3]{lu}). 
Before stating the result that we need,
let us recall a few more notions about ultrafilters.

If $\U$ and $\V$ are ultrafilters on a set $I$,
the \emph{tensor product} $\U\otimes\V$ is the ultrafilter
on $I\times I$ defined by setting
$$X\in\U\otimes\V\ \Longleftrightarrow\ 
\left\{i\in I\mid\{j\in I\mid (i,j)\in X\}\in\V\right\}\in\U.$$
If one identifies -- as it is usually done --
the Cartesian products $(I\times I)\times I$ and $I\times(I\times I)$,
then it is shown that $\otimes$ is an associative
operation, that is $(\U\otimes\V)\otimes\W=\U\otimes(\V\otimes\W)$.
In consequence, one can consider iterated tensor products
$\U_1\otimes\cdots\otimes\U_m$ with no ambiguity.

We will use the following characterization of sets in tensor
products of idempotent ultrafilters, which directly implies
a general version of Milliken-Taylor Theorem.
It is a special case of the more general \cite[Corollary 3.5]{bhw},
with the variant that cancellativity is also assumed so as to obtain 
\emph{non-principal} idempotent ultrafilters and \emph{injective} sequences.

\begin{theorem}[Bergelson-Hindman-Williams]\label{mt}
For $j=1,\ldots,m$, let $\ostar_j$ be an associative and cancellative
operation on the set $S$. Then for every set $B\subseteq S^m$ the following
properties are equivalent:
\begin{enumerate}
\item
$B\in \U_1\otimes\ldots\otimes\U_m$, where $\U_j=\U_j\ostar_j\U_j$ is a
non-principal idempotent ultrafilter of $(\beta S,\ostar_j)$  for every $j$,  
and where $\U_j=\U_{j'}$ whenever $\ostar_j=\ostar_{j'}$;
\item
For $j=1,\ldots,m$ there exist injective sequences $(x_{j,n})_{n=1}^\infty$ 
with $x_{j,n}=x_{j',n}$ whenever $\ostar_j=\ostar_{j'}$, and such that
$$\left\{\left(x^{(1)}_{F_1},\ldots,x^{(m)}_{F_m}\right)\,\Big|\,F_1<\ldots<F_m\right\}\,\subseteq\,B$$
where for finite $F=\{n_1<\ldots<n_s\}$ and $1\le j\le m$ we denoted
$x^{(j)}_{F}:=x_{j,n_1}\ostar_j\cdots\ostar_j x_{j,n_s}$.
\end{enumerate}
\end{theorem}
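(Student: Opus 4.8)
The plan is to establish the two implications separately, exploiting the recursive unwinding of the iterated tensor product: for $B\subseteq S^m$ one has $B\in\U_1\otimes\cdots\otimes\U_m$ if and only if $\{x_1\in S\mid B_{x_1}\in\U_2\otimes\cdots\otimes\U_m\}\in\U_1$, where $B_{x_1}=\{(x_2,\ldots,x_m)\mid(x_1,\ldots,x_m)\in B\}$. This suggests an induction on $m$ whose base case $m=1$ is precisely Theorem \ref{nonprincipalidempotents} (an injective sequence with $\FP(x_n)_{n=1}^\infty\subseteq A$ exists if and only if $A$ belongs to some non-principal idempotent). Throughout, the hypothesis that each $\ostar_j$ is cancellative is what makes the non-principal ultrafilters $\beta S\setminus S$ a sub-semigroup of each $(\beta S,\ostar_j)$, exactly as in the first part of the proof of Theorem \ref{nonprincipalidempotents}; this is the ingredient that upgrades the general statement of \cite[Corollary 3.5]{bhw} to non-principal idempotents and injective sequences.

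For the direction $(2)\Rightarrow(1)$, I would start from the given injective sequences $(x_{j,n})_{n=1}^\infty$ and, for each $j$, form the tail sets $\FP_{\ostar_j}(x_{j,n})_{n=s}^\infty$; the intersection of the associated basic clopen sets is a closed sub-semigroup $X_j$ of $(\beta S,\ostar_j)$, and since the sequence is injective the family of these tails together with the cofinite sets $\N\setminus\{s\}$ has the finite intersection property, so by Ellis' Lemma $X_j$ contains a \emph{non-principal} idempotent $\U_j$, just as in Theorem \ref{nonprincipalidempotents}. I would make the choices so that $\U_j=\U_{j'}$ (and $x_{j,n}=x_{j',n}$) whenever $\ostar_j=\ostar_{j'}$. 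It then remains to verify the nested membership defining $\U_1\otimes\cdots\otimes\U_m$: unwinding it coordinate by coordinate, one checks that for $\U_1$-many $x_1$, for $\U_2$-many $x_2$, and so on, the tuple lands in $B$; here non-principality lets one always select the next coordinate from an arbitrarily late tail, which is exactly what the staircase condition $F_1<\cdots<F_m$ encodes.

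For the harder direction $(1)\Rightarrow(2)$, I would build the sequences recursively using the basic idempotent lemma that $A^\star=\{a\in A\mid a^{-1}A\in\U\}$ again belongs to $\U$ and satisfies $a^{-1}(A^\star)\in\U$ for $a\in A^\star$ (see \cite{hs}). Peeling off the first coordinate via the unwinding above, one extracts from $\U_1$ a term lying in the relevant starred set and recurses, with non-principality used at each stage to choose each new term outside the finite set of previously chosen values, thereby forcing injectivity. \textbf{The main obstacle} I anticipate lies precisely in the bookkeeping forced by the coupling $\U_j=\U_{j'}$ when $\ostar_j=\ostar_{j'}$: this constraint ties together coordinates that need not be adjacent, so a naive peel-off of one coordinate at a time does not respect it. The construction must instead be organized around the \emph{distinct} operations, producing a single sequence per operation and then distributing increasing blocks $F_1<\cdots<F_m$ among the coordinates sharing that operation, all while maintaining the nested membership in the several idempotents simultaneously. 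Handling this simultaneity — rather than the largeness estimates, which are routine given Galvin's Theorem and Theorem \ref{nonprincipalidempotents} — is where the real work of the argument concentrates.
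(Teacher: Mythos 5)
Your direction $(2)\Rightarrow(1)$ is essentially sound: since the hypothesis already forces $x_{j,n}=x_{j',n}$ whenever $\ostar_j=\ostar_{j'}$, the closed sub-semigroups $X_j=\bigcap_{s}\mathcal{O}_{\FP(x_{j,n})_{n=s}^\infty}$ coincide for coupled coordinates, Ellis' Lemma applied inside $X_j\cap(\beta S\setminus S)$ produces the required non-principal idempotents exactly as in Theorem \ref{nonprincipalidempotents}, and the nested membership in $\U_1\otimes\cdots\otimes\U_m$ is verified because \emph{every tail} $\FP(x_{j,n})_{n=s}^\infty$ belongs to $\U_j$ (this, not non-principality, is what lets you pass to arbitrarily late tails at each coordinate). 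The genuine gap is in $(1)\Rightarrow(2)$, which is the only direction the paper actually uses (through Corollary \ref{mt-corollary}) and which carries the entire content of the Milliken--Taylor theorem. You correctly name the obstacle --- the coupling of coordinates sharing an operation and the need to maintain the nested memberships simultaneously --- but you do not resolve it, and it cannot be set aside as routine bookkeeping. The difficulty is already present when the operations are pairwise distinct: in your proposed induction on $m$, after extracting a sequence $(x_{1,n})$ all of whose finite $\ostar_1$-products lie in $C=\{a\in S\mid B_a\in\U_2\otimes\cdots\otimes\U_m\}$, the inductive hypothesis would have to be applied simultaneously to the infinitely many sections $B_{x^{(1)}_{F_1}}$, each a priori producing different sequences; since a countable intersection of members of an ultrafilter need not be a member, a coordinate-by-coordinate peel-off collapses. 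The correct argument is a single diagonal construction that at each stage chooses one new element per \emph{distinct} operation inside the intersection of the finitely many starred sets $D^\star$ attached to all partial staircases built so far; this construction is exactly what \cite[\S 3]{bhw} carries out, and it is precisely the part your proposal leaves open.

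It is also worth pointing out that the paper itself does not prove Theorem \ref{mt}: it quotes it as a special case of \cite[Corollary 3.5]{bhw}, observing only that the added cancellativity hypothesis upgrades the statement to non-principal idempotents and injective sequences, by the same device used in Theorem \ref{nonprincipalidempotents}. So your plan attempts strictly more than the paper does, which is commendable; but as written it must either carry out the diagonal construction sketched above or, like the paper, invoke \cite{bhw} for the hard implication.
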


Recall that if $\W$ is an ultrafilter on a set $X$ and $f:X\to Y$ is any function,
then the \emph{image ultrafilter} $f(\W)$ on $Y$ is defined by setting
$$f(\W)\ :=\ \{B\subseteq Y\mid f^{-1}(B)\in\W\}.$$
Notice that $A\in\W\Rightarrow f(A)\in f(\W)$, but not conversely.

If $\star$ is an associative operation on $S$, and we denote
by $f_\star:S\times S\to S$ the corresponding
binary function $f_\star:(a,b)\longmapsto a\star b$, then it is readily verified
that the extension of $\star$ to the Stone-\v{C}ech compactification $\beta S$
is given by ultrafilter images of tensor products. Precisely, for all $\U,\V\in\beta S$:
$$\U\star\V\ =\ f_\star(\U\otimes\V).$$
For instance, if $f:\N\times\N\to\N$ is the sum function
$f(n,m)=n+m$, then the image ultrafilter $f(\U\otimes\V)=\U\oplus\V$
is the usual extension of the sum in the Stone-\v{Cech} compactification $\beta\N$.

We will use the following straightforward consequence of the previous theorem,
that seems never to have been formulated explicitly.

\begin{corollary}\label{mt-corollary}
Let $f:S^m\to T$ be any function, and let $A\subseteq T$.
Then the equivalence given by the previous theorem also holds in the modified 
formulation where in (1) we consider 
the condition $A\in f(\U_1\otimes\ldots\otimes\U_m)$, 
and where in (2) we consider the condition
$$\left\{f\left(x^{(1)}_{F_1},\ldots,x^{(m)}_{F_m}\right)\,\Big|\,F_1<\ldots<F_m\right\}\,\subseteq\,A$$
\end{corollary}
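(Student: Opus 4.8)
The plan is to reduce the statement immediately to Theorem \ref{mt} by passing to the preimage $B := f^{-1}(A) \subseteq S^m$. The only ingredients needed are the definition of the image ultrafilter and one elementary set-theoretic identity, so no genuinely new work is required beyond bookkeeping.

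First I would unwind the definition of $f(\U_1 \otimes \cdots \otimes \U_m)$. Since the image ultrafilter is defined by $f(\W) = \{A \mid f^{-1}(A) \in \W\}$, a set $A \subseteq T$ belongs to $f(\U_1 \otimes \cdots \otimes \U_m)$ if and only if $f^{-1}(A) \in \U_1 \otimes \cdots \otimes \U_m$. Hence, taking $B := f^{-1}(A)$, the condition ``$A \in f(\U_1 \otimes \cdots \otimes \U_m)$'' of the corollary is \emph{verbatim} condition $(1)$ of Theorem \ref{mt} for this choice of $B$, including the requirements that each $\U_j = \U_j \ostar_j \U_j$ be a non-principal idempotent of $(\beta S, \ostar_j)$ and that $\U_j = \U_{j'}$ whenever $\ostar_j = \ostar_{j'}$.

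Next I would invoke the elementary fact that for any subset $Y \subseteq S^m$ one has $Y \subseteq f^{-1}(A)$ if and only if $f(Y) \subseteq A$. Applying this with $Y = \{(x^{(1)}_{F_1}, \ldots, x^{(m)}_{F_m}) \mid F_1 < \cdots < F_m\}$ shows that the inclusion $Y \subseteq B = f^{-1}(A)$ of condition $(2)$ of Theorem \ref{mt} is equivalent to the inclusion $\{f(x^{(1)}_{F_1}, \ldots, x^{(m)}_{F_m}) \mid F_1 < \cdots < F_m\} \subseteq A$ of condition $(2)$ of the corollary; the injectivity of the sequences and the coincidence conditions $x^{(j)}_n = x^{(j')}_n$ whenever $\ostar_j = \ostar_{j'}$ are carried over unchanged. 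Combining the two observations, the asserted equivalence follows at once by applying Theorem \ref{mt} to $B = f^{-1}(A)$.

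I do not expect a genuine obstacle here: the entire content lies in Theorem \ref{mt}, and the corollary is merely its transcription through the preimage map. The one point worth stating carefully is that we use the \emph{preimage} characterization $A \in f(\W) \Leftrightarrow f^{-1}(A) \in \W$, which is an equivalence by the very definition of $f(\W)$, rather than anything about images of sets (for which, as already remarked, only $A \in \W \Rightarrow f(A) \in f(\W)$ holds). Both directions of the set-theoretic identity $Y \subseteq f^{-1}(A) \Leftrightarrow f(Y) \subseteq A$ are trivial, so the proof is a short and direct reformulation.
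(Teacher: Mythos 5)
Your proposal is correct and follows exactly the route the paper takes: the paper's proof is the one-line reduction ``apply Theorem \ref{mt} to the preimage $B:=f^{-1}(A)\in\U_1\otimes\cdots\otimes\U_m$,'' which is precisely your argument. Your write-up merely spells out the two elementary facts the paper leaves implicit, namely $A\in f(\W)\Leftrightarrow f^{-1}(A)\in\W$ and $Y\subseteq f^{-1}(A)\Leftrightarrow f(Y)\subseteq A$.
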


\begin{proof}
Given $A\in f(\U_1\otimes\cdots\otimes\U_m)$, apply Theorem \ref{mt} to the preimage
$B:=f^{-1}(A)\in\U_1\otimes\cdots\otimes\U_m$.
\end{proof}

If $\U$ is an ultrafilter on $\N$ and $a\in\N$, then
the ultrafilter $a\U$ is defined by setting:
$$A\in a\U\ \Longleftrightarrow\ A/a:=\{n\in\N\mid n a\in A\}\in\U.$$
We remark that $a\U$ is not the same as $\mathfrak{U}_a\odot\U$,
where $\mathfrak{U}_a$ is the principal ultrafilter generated by $a$.

Now consider the semigroup $(S,\star)=(\N,+)$.
If we take as $f:\N^m\to\N$ the linear function 
$f(x_1,\ldots,x_m)=a_1x_1+\ldots+a_m x_m$ where $a_i\in\N$,
then the corollary above yields an arithmetic formulation of Milliken-Taylor Theorem 
that is well-known, namely:

\begin{itemize}
\item
``Arithmetic" Milliken-Taylor Theorem. 
\emph{Let $a_1,\ldots,a_m\in\N$.
Then the following properties are equivalent for every $A\subseteq\N$:
\begin{enumerate}
\item
$A\in a_1\U\oplus\cdots\oplus a_m\U$ for a suitable
non-principal idempotent ultrafilter $\U$ on $\N$.
\item
There exists an injective sequence $(x_n)_{n=1}^\infty$ such that
$$\left\{a_1\cdot x_{F_1}+\ldots+a_n\cdot x_{F_m}\mid F_1<\ldots<F_m\right\}\subseteq A$$
where for $F=\{n_1<\ldots<n_s\}$ we denoted $x_F:=x_{n_1}+\ldots+x_{n_s}$.
\end{enumerate}}
\end{itemize}

Indeed, it is easily verified that for every
ultrafilter $\U$ on $\N$ one has that the image ultrafilter
$f(\U\otimes\cdots\otimes\U)=a_1\U\oplus\ldots\oplus a_m\U$.\footnote
{~$f(z_1,\ldots,z_m)=a_1 z_1+\ldots+a_m z_m$ 
is just one simple example of a function that is ``coherent" with respect to tensor products.
Indeed, for every polynomial $P(z_1,\ldots,z_m)$ over $\N$ there is a canonical polynomial
function $\widetilde{P}:(\beta\N)^m\to\beta\N$ such that the image ultrafilter
$P(\U_1\otimes\cdots\otimes\U_m)=\widetilde{P}(\U_1,\ldots,\U_m)$
for all ultrafilters $\U_1,\ldots,\U_m$. The definition of $\widetilde{P}$ is obtained from the definition 
of $P$ by replacing the sum $+$ and the product $\cdot$ on $\N$ with their canonical 
extensions $\oplus$ and $\odot$ on $\beta\N$, respectively (see \cite{wi}).
More generally, in \cite[\S 3]{bhw} the class of \emph{extended polynomials} 
$f(z_1,\ldots,z_m)$ is introduced for any given set of associative operations on a set $S$ 
in such a way that one can naturally define the corresponding functions $\widetilde{f}$
that satisfy $f(\U_1\otimes\cdots\otimes\U_m)=\widetilde{f}(\U_1,\ldots,\U_m)$
for all ultrafilters $\U_1,\ldots,\U_m$ on $S$.}

%

\subsection{Proofs of Theorems \ref{main4} and \ref{main5}}

We will use the following general properties of
tensor products of ultrafilters. 

\begin{lemma}\label{lemmatensor}
Let $X\subseteq\N^m$.
\begin{enumerate}
\item
Assume that:
$\exists\,\overline{n}_1\ \forall n_1\ge\overline{n}_1\ 
\exists\,\overline{n}_2\ \forall n_2\ge\overline{n}_2\ \ldots\
\exists\,\overline{n}_m\ \forall n_m\ge\overline{n}_m$ 
one has $(n_1,n_2,\ldots,n_m)\in X$.
Then for all non-principal ultrafilters $\U_1,\ldots,\U_m$ on $\N$,
it is $X\in\U_1\otimes\cdots\otimes\U_m$.
\item
Assume that: 
$\exists\,\overline{n}_1,N_1\ \forall n_1\ge\overline{n}_1\ 
\exists\,\overline{n}_2,N_2\ \forall n_2\ge\overline{n}_2\ \ldots\
\exists\,\overline{n}_m,N_m$ $\forall n_m\ge\overline{n}_m$ one has
$(n_1N_1,n_2N_2,\ldots,n_mN_m)\in X$.
Then for all ultrafilters $\U_1,\ldots,\U_m$ on $\N$
such that $t\N:=\{tn\mid n\in\N\}\in\U_j$ for every $j=1,\ldots,m$ and for every $t\in\N$,
it is $X\in\U_1\otimes\cdots\otimes\U_m$. 
\end{enumerate}
\end{lemma}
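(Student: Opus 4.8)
The plan is to prove both statements by induction on the number of variables $m$, peeling off the outermost quantifier at each step. The main tool is the recursive description of iterated tensor products: since $\otimes$ is associative, $\U_1\otimes\cdots\otimes\U_m=\U_1\otimes(\U_2\otimes\cdots\otimes\U_m)$, so writing $X_{n_1}:=\{(n_2,\ldots,n_m)\mid (n_1,n_2,\ldots,n_m)\in X\}\subseteq\N^{m-1}$ for the slice, the definition of tensor product gives
$$X\in\U_1\otimes\cdots\otimes\U_m\ \Longleftrightarrow\ \{n_1\in\N\mid X_{n_1}\in\U_2\otimes\cdots\otimes\U_m\}\in\U_1.$$
Thus it suffices to show that the set of \emph{good} first coordinates belongs to $\U_1$, and the induction reduces the membership of $X_{n_1}$ to the $(m-1)$-variable case.

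For part $(1)$, the only fact needed is that a non-principal ultrafilter on $\N$ contains every cofinite set, and in particular every tail $\{n\mid n\ge\overline{n}\}$. In the base case $m=1$ the hypothesis says precisely that $X$ contains a tail, whence $X\in\U_1$. For the inductive step, the hypothesis provides $\overline{n}_1$ such that for every $n_1\ge\overline{n}_1$ the slice $X_{n_1}$ satisfies the $(m-1)$-variable hypothesis; by the inductive hypothesis applied to each such slice, $X_{n_1}\in\U_2\otimes\cdots\otimes\U_m$. Hence the set of good first coordinates contains the tail $\{n_1\mid n_1\ge\overline{n}_1\}\in\U_1$, and the recursive description yields $X\in\U_1\otimes\cdots\otimes\U_m$.

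For part $(2)$, I would first observe that the hypothesis $t\N\in\U_j$ for every $t\in\N$ forces each $\U_j$ to be non-principal, since a principal ultrafilter $\UU_a$ fails to contain $(a+1)\N$; so tails lie in each $\U_j$ as well. The induction then runs as before, but with the substitution $m_1=n_1N_1$ replacing the raw tail. In the base case the hypothesis gives $X\supseteq\{n_1N_1\mid n_1\ge\overline{n}_1\}=N_1\N\cap\{m\mid m\ge\overline{n}_1 N_1\}$, and since both $N_1\N\in\U_1$ and the tail lie in $\U_1$, their intersection—hence $X$—belongs to $\U_1$. For the inductive step, for each fixed $n_1\ge\overline{n}_1$ the slice $X_{n_1N_1}$ satisfies the $(m-1)$-variable hypothesis (with the inner factors $N_2,\ldots,N_m$ allowed to depend on the now-fixed $n_1$), so by the inductive hypothesis $X_{n_1N_1}\in\U_2\otimes\cdots\otimes\U_m$; thus the set of good first coordinates contains $\{n_1N_1\mid n_1\ge\overline{n}_1\}=N_1\N\cap\{m\mid m\ge\overline{n}_1N_1\}\in\U_1$, closing the induction.

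The argument is essentially bookkeeping, so the only real care is in part $(2)$: one must track that the scaling factors $N_j$ in the quantifier prefix may depend on the earlier variables $n_1,\ldots,n_{j-1}$ but not on the later ones, and simultaneously exploit \emph{both} membership facts $N_1\N\in\U_1$ and ``tail $\in\U_1$'' to recover $\{n_1N_1\mid n_1\ge\overline{n}_1\}$ inside $\U_1$. This is the step I expect to be the main (if minor) obstacle.
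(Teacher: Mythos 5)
Your proof is correct and takes essentially the same approach as the paper's: induction on $m$, using the associativity of $\otimes$ to reduce membership of $X$ to membership of the slices $X_{n_1N_1}$ in $\U_2\otimes\cdots\otimes\U_m$, with the set of good first coordinates then landing in $\U_1$. The only cosmetic difference is that the paper puts $\{n_1N_1\mid n_1\ge\overline{n}_1\}$ into $\U_1$ by observing it contains the full multiple set $(\overline{n}_1N_1)\N$ (so non-principality is never invoked in part (2), and part (1) is just the case $N_j=1$), whereas you decompose it as $N_1\N$ intersected with a tail after first deriving non-principality from the hypothesis — both are fine.
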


\begin{proof}
$(1)$.\ This proof is obtained from the proof of property $(2)$ below, 
by letting $N_j=1$ for all $j=1,\ldots,m$.

\smallskip
$(2)$.\
We proceed by induction on $m$.
In the base case $m=1$, one has $X\in\U_1$ because $X$
is a superset of $t\N$ where $t:=\overline{n}_1N_1$.
At the inductive step $m+1$, for every $a\in\N$ consider the set 
$$X(a)\ :=\ \{(a_2,\ldots,a_m,a_{m+1})\in\N^m\mid (a,a_2,\ldots,a_m,a_{m+1})\in X\}.$$
It is easily seen that for every $n_1\ge\overline{n}_1$ the 
set $X(n_1N_1)\subseteq\N^m$ satisfies the inductive hypothesis, and so
$X(n_1N_1)\in\U_2\otimes\cdots\otimes\U_m\otimes\U_{m+1}$.
But then $\{a\in\N\mid X(a)\in\U_2\otimes\cdots\otimes\U_m\otimes\U_{m+1}\}\in\U_1$,
since it includes the set $t_1\N$ where $t_1:=\overline{n}_1N_1$.
This means that $X\in\U_1\otimes(\U_2\otimes\cdots\otimes\U_m\otimes\U_{m+1})$,
as desired.
\end{proof}

%

\begin{proof}[Proof of Theorem \ref{main4}]
If $\ell_j\ne 0$ divides $k(k-1)$, then by Proposition \ref{lk},
$(\Z,\textcircled{$\star$}_{\ell_j,k_j})$
is a semigroup that has $\Z\setminus\{-\frac{k_j}{\ell_j}\}$
as a cancellative subsemigroup. So, by Lemma \ref{nonprincipalidempotents}
we can pick a \emph{non-principal} idempotent ultrafilter
$\U_j=\U_j\textcircled{$\star$}_{\ell_j,k_j}\U_j$ on $\Z$. 
If $(\ell_j,k_j)=(0,1)$, then $\textcircled{$\star$}_{0,1}$
is the sum operation on $\Z$. Clearly, $\Z\setminus\{0\}$ is a
cancellative subsemigroup of $(\Z,+)$, and also in this case
we can pick a \emph{non-principal} idempotent ultrafilter 
$\U_j=\U_j\textcircled{$\star$}_{0,1}\U_j=\U_j\oplus\U_j$ on $\Z$.
Choose the above idempotent ultrafilters 
in such a way that $\U_{j}=\U_{j'}$ whenever $(\ell_{j},k_{j})=(\ell_{j'},k_{j'})$.
Now consider the image ultrafilter $\W:=f(\U_1\otimes\cdots\otimes\U_m)$ on $\Z$.
Given a finite coloring $\Z=C_1\cup\ldots\cup C_r$, let $C_i$ be
the color such that $C_i\in\W$, and apply Corollary \ref{mt-corollary} 
where $S=T=\Z$, and where the considered associative
operations are $\ostar_j=\textcircled{$\star$}_{\ell_j,k_j}$ for $j=1,\ldots,m$.
Then for all $F_1<\ldots<F_m$ we have that
$$\left\{f\left(x^{(1)}_{F_1},\ldots,x^{(m)}_{F_m}\right)\,\Big|\, F_1<\ldots<F_m\right\}\ 
\subseteq\ C_i,$$
where if $F_j=\{n_1<\ldots<n_s\}\subset\N$, we denoted
$$x^{(j)}_{F_j}\ =\ x_{j,n_1}\textcircled{$\star$}_{\ell_j,k_j}\ldots\textcircled{$\star$}_{\ell_j,k_j} x_{j,n_s}\ =\ 
\mathfrak{S}_{\ell_j,k_j}(x^{(j)}_{n_1},\cdots,x^{(j)}_{n_s}).$$

\smallskip
Let us now turn to the case when all $\ell_j\ge 0$ and 
the function $f:\N^m\to\Z$ satisfies condition $(\dagger)$.
Pick $M\in\N$ such that $M\ge\frac{1-k_j}{\ell_j}$ for all $j$ with $\ell_j\ne 0$.
Then $\N':=\{a\in\Z\mid a\ge M\}$ is a cancellative sub-semigroup 
of $(\Z,\textcircled{$\star$}_{\ell_j,k_j})$ for every $j$.
Indeed, if $\ell_j=0$ then $k_j=1$ and $\lk$ is the sum operation,
and clearly $(\N',+)$ is a subsemigroup.
If $\ell_j\ne 0$, then notice that $M=-\frac{k_j}{\ell_j}+\frac{N_j}{\ell_j}$ where
$N_j:=\ell_j M+k_j\ge 1$ is a natural number,
and hence $(\N',\textcircled{$\star$}_{\ell_j,k_j})$ is a cancellative sub-semigroup
of $(\Z,\lk)$ by Proposition \ref{lk} (10).
Then we can pick \emph{non-principal} idempotent ultrafilters
$\U_j=\U_j\textcircled{$\star$}_{\ell_j,k_j}\U_j$ on $\N'$,
in such a way that $\U_{j}=\U_{j'}$ whenever $(\ell_{j},k_{j})=(\ell_{j'},k_{j'})$.
Now consider the tensor product $\U_1\otimes\cdots\otimes\U_m$
on $(\N')^m$, and let $\W=g(\U_1\otimes\cdots\otimes\U_m)$
be its image ultrafilter on $\Z$ under the restriction $g:=f|_{(\N')^m}:(\N')^m\to\Z$.

Without loss of generality, one can assume that in property $(\dagger)$ 
one has $\overline{n}_j\ge M$ for every $j=1,\ldots,m$, and so the set 
\begin{multline*}
X\ :=\ g^{-1}(\N) =\ \{(n_1,\ldots,n_m)\in(\N')^m\mid g(n_1,\ldots,n_m)\in\N\}\ =
\\
\{(n_1,\ldots,n_m)\in \N^m\mid n_j\ge M\ \text{for all}\ j=1,\ldots,m\ \ \text{and}\ 
f(n_1,\ldots,n_m)\in\N\}
\end{multline*}
satisfies the hypothesis of Lemma \ref{lemmatensor} (1).
Then $\N\in g(\U_1\otimes\cdots\otimes\U_m)=\W$, 
and given any finite coloring $\N=C_1\cup\ldots\cup C_r$, 
one of the colors $C_i\in\W$. We reach the thesis by
applying Corollary \ref{mt-corollary}
where $S=\N'$, $T=\Z$, $A=C_i$, and where the considered associative
operations are $\ostar_j=\textcircled{$\star$}_{\ell_j,k_j}$ for $j=1,\ldots,m$.

\end{proof}

For the next proof, we will need the existence of idempotent ultrafilters
with an additional property.

\begin{lemma}\label{lemma-tN}
Let $\ell\in\N$ and $k\in\{0,1\}$.
Then there exist idempotent ultrafilters $\U=\U\lk\U$ 
in the semigroup $(\beta\N,\lk)$ such that $t\N\in\U$ for every $t\in\N$.
\end{lemma}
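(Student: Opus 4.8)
The plan is to realize the desired idempotents inside the closed set of ultrafilters that ``see'' every arithmetic progression $t\N$, and then to produce one of them via Ellis' Lemma. Since $\ell\ge 1$ and $k\in\{0,1\}$ give $k(k-1)=0$, the operation $a\lk b=\ell ab+k(a+b)$ maps $\N\times\N$ into $\N$, so $(\N,\lk)$ is a commutative semigroup and $(\beta\N,\lk)$ is a compact right topological semigroup. Set
$K:=\bigcap_{t\in\N}\mathcal{O}_{t\N}=\{\U\in\beta\N\mid t\N\in\U\ \text{for every}\ t\in\N\}$.
I will show that $K$ is a nonempty closed subsemigroup of $(\beta\N,\lk)$; then Ellis' Lemma, applied to the compact right topological semigroup $K$, yields an idempotent $\U=\U\lk\U$ with $t\N\in\U$ for all $t$, which is exactly the assertion.

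First, $K$ is closed, being an intersection of the basic clopen sets $\mathcal{O}_{t\N}$. It is nonempty because the family $\{t\N\mid t\in\N\}$ has the finite intersection property: a finite intersection $t_1\N\cap\cdots\cap t_n\N$ equals $m\N$ with $m=\mathrm{lcm}(t_1,\ldots,t_n)$. Hence, by compactness of $\beta\N$, the intersection $K$ is nonempty.

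The key step is that $K$ is closed under $\lk$. Fix $\U,\V\in K$ and $t\in\N$; I must check that $t\N\in\U\lk\V$, i.e. that $\{a\in\N\mid\{b\in\N\mid a\lk b\in t\N\}\in\V\}\in\U$. The decisive observation is that divisibility by $t$ is preserved: if $a\in t\N$ and $b\in t\N$, then each summand of $a\lk b=\ell ab+k(a+b)$ is a multiple of $t$, so $a\lk b\in t\N$. Consequently, for every $a\in t\N$ one has $t\N\subseteq\{b\mid a\lk b\in t\N\}$, and since $t\N\in\V$ this inner set lies in $\V$. Therefore $\{a\mid\{b\mid a\lk b\in t\N\}\in\V\}\supseteq t\N\in\U$, which gives $t\N\in\U\lk\V$. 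As $t$ was arbitrary, $\U\lk\V\in K$, so $K$ is a subsemigroup.

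This divisibility computation is the only real content; once it is in place, the conclusion follows at once from Ellis' Lemma, since associativity and the right topological structure on $K$ are inherited from $(\beta\N,\lk)$ and $K$ is compact as a closed subset of $\beta\N$. I expect no further obstacle. As a side remark, every element of $K$ is automatically non-principal, because no positive integer is divisible by all $t\in\N$; thus the idempotent produced is in fact non-principal, although for this lemma only its existence is needed.
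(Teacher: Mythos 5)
Your proof is correct and follows essentially the same route as the paper's: both consider the closed set $\bigcap_{t\in\N}\mathcal{O}_{t\N}$, observe that each $t\N$ is closed under $\lk$ (so this set is a subsemigroup of $(\beta\N,\lk)$), and invoke Ellis' Lemma to extract an idempotent there. You merely spell out in more detail what the paper leaves to the reader (the finite intersection property via least common multiples and the explicit ultrafilter-product computation), and your closing remark that such idempotents are automatically non-principal is accurate and consistent with how the lemma is used later.
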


\begin{proof}
Fix any $\ell\in\N$. Since $k\in\{0,1\}$, both $(\N,\textcircled{$\star$}_{\ell,0})$ and 
$(\N,\textcircled{$\star$}_{\ell,1})$ are cancellative semigroups:
this is easily checked directly, or can be derived from the general properties 
(9) and (10) of Proposition \ref{lk}.

Now 
consider the nonempty closed subspace $X:=\bigcap_{t\in\N}\mathcal{O}_{t\N}$ 
of $\beta\N$. Notice that if $a,b\in t\N$ then also $a\,\textcircled{$\star$}_{\ell,0}b=\ell ab\in t\N$
and $a\,\textcircled{$\star$}_{\ell,1}b=\ell ab+a+b\in t\N$.
In consequence, it is easily seen that $X$ is a sub-semigroup of both 
$(\beta\N,\textcircled{$\star$}_{\ell,0})$ and $(\beta\N,\textcircled{$\star$}_{\ell,1})$. 
Then, by Ellis' Lemma, there exist idempotent ultrafilters 
$\U=\U\textcircled{$\star$}_{\ell,0}\U$ in $(X,\textcircled{$\star$}_{\ell,0})$, 
and idempotent ultrafilters $\V=\V\textcircled{$\star$}_{\ell,1}\V$ in 
$(X,\textcircled{$\star$}_{\ell,1})$.
\end{proof}


\begin{proof}[Proof of Theorem \ref{main5}]
The proof is entirely similar to the above proof of Theorem \ref{main4}.
When $(\ell_j,k_j)=(0,1)$, that is, when the operation $\textcircled{$\star$}_{\ell_j,k_j}$ is
the sum on the integers, pick a non-principal ultrafilter $\U_j=\U_j\oplus\U_j$ on 
the natural numbers $\N$. It is a well-known fact
that every idempotent ultrafilter $\U$ in $(\beta\N,\oplus)$
is such that $t\N\in\U$ for every $t\in\N$ (see e.g. \cite[Lemma 5.19.1]{hs}).
When $\ell_j\in\N$ and $k_j\in\{0,1\}$, by Lemma \ref{lemma-tN}
we can pick an idempotent ultrafilter $\U_j=\U_j\textcircled{$\star$}_{\ell_j,k_j}\U_j$ on $\N$
such that $t\N\in\U$ for every $t\in\N$.
Choose the above idempotent ultrafilters 
in such a way that $\U_{j}=\U_{j'}$ whenever $(\ell_{j},k_{j})=(\ell_{j'},k_{j'})$.
Then consider the ultrafilter $\U_1\otimes\cdots\otimes\U_m$
on $\N^m$, and let $\W=f(\U_1\otimes\cdots\otimes\U_m)$
be its image ultrafilter on $\Z$ under the function $f$.
Property $(\ddagger)$ says that the set 
$$X\ :=\ f^{-1}(\N) =\ \{(n_1,\ldots,n_m)\in\N^m\mid f(n_1,\ldots,n_m)\in\N\}$$
satisfies the hypothesis of Lemma \ref{lemmatensor} (2).
So, $f^{-1}(\N)\in\U_1\otimes\cdots\otimes\U_m$, and hence $\N\in\W$. 
Then, given any finite coloring $\N=C_1\cup\ldots\cup C_r$, 
one of the colors $C_i\in\W$, and we reach the thesis by
applying Corollary \ref{mt-corollary}
where $S=T=\N$, $A=C_i$, and where the considered associative
operations are $\ostar_j=\textcircled{$\star$}_{\ell_j,k_j}$ for $j=1,\ldots,m$.
\end{proof}

\end{section}

\medskip
\section{Final remarks}

We close this paper with a list of remarks about possible
directions for future research.

\medskip
\begin{enumerate}
\item
The associative operations $\lk$ that we defined in this paper
over the integers $\Z$ also make sense in any commutative ring
$(R,+,\cdot)$, and (part of) our results could be extended
to that framework. Are there meaningful
examples that would justify such a generalization?

\smallskip
\item
In Theorems \ref{main4} and \ref{main5}
we considered polynomials in several variables with positive leading coefficient
as functions that satisfy condition $(\dagger)$ or $(\ddagger)$.
Are there are other meaningful classes of functions that satisfy those
conditions?

\smallskip
\item
The results of this paper are grounded on generalized versions
of Hindman's, Deuber's, and Milliken-Taylor's Theorems in the framework of
semigroups. Recently, also several generalizations of the Central Set Theorem
have been demonstrated for semigroups (see \cite{hi3} for a historical
survey about central sets). 
Can the study of central sets in semigroups $(\beta\N,\lk)$
lead to meaningful results in arithmetic Ramsey Theory?

\smallskip
\item
The problem of partition regularity of non-linear Diophantine equations
have been recently investigated, producing interesting results.
(Note that partition regularity of equations corresponds directly 
to finite monochromatic patterns.)
In 2017, J. Moreira \cite{mo} demonstrated that the configuration
$\{a, a+b, a\cdot b\}$ is monochromatic in the natural numbers;
this year 2021, the existence of similar monochromatic patterns,
including $\{a, a+b, a\cdot b+a+b\}$, has been proved
by J.M. Barrett, M. Lupini, and J. Moreira \cite{blm}.
It seems worth investigating
to what extent the results presented in this paper can be used
to address the general problem of partition regularity of non-linear Diophantine equations.

\smallskip
\item
If $(S,*)$ is any countable semigroup, then every bijection 
$\varphi:\N\to S$ determines an associative operation ${\textcircled{$*$}}_\varphi$
on the natural numbers by setting:
$$a\,\underset{\varphi}{\textcircled{$*$}}\,b\,=\,c\ \ \Longleftrightarrow\ \ 
\varphi(a)*\varphi(b)=\varphi(c).$$
Similar arguments to those used in this article may also be applied 
to such operations to produce partition regularity results.
In particular, operations on $\N$ induced by multiplicative subgroups
of the integers (such as the set of sums of two squares) seem worth investigating.


\smallskip
\item
A topic of research in arithmetic Ramsey Theory is about the partition regularity of infinite image partition 
regular matrices. The known examples are rather limited (see the recent paper \cite{hs2}
and references therein),
and mostly rely on Hindman Theorem. Starting from
the Finite Product Theorem for the operations $\lk$,
it may be worth investigating whether 
some new interesting classes of infinite partition regular matrices could be isolated.
\end{enumerate}

\medskip
\bibliographystyle{amsalpha}

\end{document}